\newtheoremstyle{mio}%
	{}{} 
	{\itshape}{} 
	{\bfseries}{.}{ } 
	{#1 #2\thmnote{~\mdseries(#3)}} 
\theoremstyle{mio}
\newtheorem{teor}{Theorem}[section]
\newtheorem{cor}[teor]{Corollary}
\newtheorem{prop}[teor]{Proposition}
\newtheorem{lemma}[teor]{Lemma}
\newtheorem{defin}[teor]{Definition}
\newtheoremstyle{definition2}%
	{}{} 
	{}{} 
	{\bfseries}{.}{ } 
	{#1 #2\thmnote{\mdseries~ #3}} 
\theoremstyle{definition2}
\newtheorem{oss}[teor]{Remark}
\newcommand{\trasposta}{\intercal}
\newcommand{\Int}{\mathrm{Int}}
\newcommand{\caratt}{\mathfrak{J}}
\newcommand{\polya}{\mathrm{Po}}
\newcommand{\markov}{\mathcal{C}}
\title[Density of solutions and Markov chains]{Calculating the density of solutions of equations related to the P\'olya-Ostrowski group through Markov chains}
\author{Dario Spirito}
\email{spirito@mat.uniroma3.it}
\address{Dipartimento di Matematica e Fisica, Universit\`a degli Studi ``Roma Tre'', Roma, Italy}
\subjclass[2010]{Primary: 11B05. Secondary: 11A07, 11A63, 13F05, 13F20}
\keywords{Distribution modulo $d$; natural density; integer-valued polynomials; P\'olya-Ostrowski group; Markov chain; $q$-additive functions}
\begin{document}
\begin{abstract}
Motivated by a problem in the theory of integer-valued polynomials, we investigate the natural density of the solutions of equations of the form $\theta_uu_q(n)+\theta_ww_q(n)+\theta_2\frac{n(n+1)}{2}+\theta_1n+\theta_0\equiv 0\bmod d$, where $d,q\geq 2$ are fixed integers, $\theta_u,\theta_w,\theta_2,\theta_1,\theta_0$ are parameters and $u_q$ and $w_q$ are functions related to the $q$-adic valuations of the numbers between 1 and $n$. We show that the number of solutions of this equation in $[0,N)$ satisfies a recurrence relation, with which we can associate to any pair $(d,q)$ a stochastic matrix and a Markov chain. Using this interpretation, we calculate the density for the case $\theta_u=\theta_2=0$ and for the case $\theta_u=1$, $\theta_w=\theta_2=\theta_1=0$ and either $d|q$ or $d$ and $q$ are coprime.
\end{abstract}

\maketitle

\section{Introduction}
Let $q\geq 2$ be a positive integer. Following \cite{IntD} and \cite{elliott-polya}, we define the following three functions on $\insN$:
\begin{equation*}
v_q(n):=\begin{cases}
0 & \text{if~}n=0\\
\max\{k\inN\mid q^k\text{~divides}~n\} & \text{if~}n>0;\\
\end{cases}\eqlnn
w_q(n):=\sum_{i=0}^nv_q(i)=\sum_{k\geq 1}\left\lfloor\frac{n}{q^k}\right\rfloor;\eqlnn
u_q(n):=\sum_{i=0}^nw_q(i).
\end{equation*}

The functions $w_q(n)$ and $u_q(n)$ arise naturally during the study of the \emph{P\'olya-Ostrowski group} $\polya(D)$ of a Dedekind domain $D$, a subgroup of the class group of $D$ closely related to the ring of integer-valued polynomials $\Int(D)$ over $D$. In particular, $\polya(D)$ is linked with the module structure of two sequences of $D$-modules, namely the sequence of \emph{characteristic ideals} $\caratt_n(D)$ of $D$ and the sequence of the subsets $\Int_n(D)$ of $\Int(D)$ formed by the polynomials $f\in\Int(D)$ of degree at most $n$. More precisely, $\caratt_n(D)$ and $\Int_n(D)$ are free if and only if, respectively, $w_q(n)\equiv 0\bmod d$ and $u_q(n)\equiv 0\bmod d$ for every $q$, where $d$ is the order of the ideal $\Pi_q$ (defined as the product of the maximal ideals $\mathfrak{m}$ of $D$ such that $|D/\mathfrak{m}|=q$) in the class group of $D$. See Section \ref{sect:Int} for a more detailed explanation.

This ideas led Elliott \cite{elliott-polya} to study the equations $w_q(n)\equiv x\bmod d$ and $u_q(n)\equiv x\bmod d$, where $d$ and $x$ are integers; in particular, he was interested in the \emph{density} of the set of solutions, where the \emph{(natural) density} of a subset $T\subseteq\insN$ is the limit
\begin{equation*}
\delta(T):=\lim_{N\to\infty}\frac{|\{n\in T\mid 0\leq n<N\}|}{N}
\end{equation*}
(provided that it exists). Through a mixture of special cases and experimental evidence, he conjectured \cite[Conjecture 2.2]{elliott-polya} that the density of the solutions always exists and is rational, and that if $x=0$ then it is at least $1/d$.

In this paper, we consider the more general equation
\begin{equation}\label{eq:totale}
\theta_uu_q(n)+\theta_ww_q(n)+\theta_2\frac{n(n+1)}{2}+\theta_1n+\theta_0\equiv 0\bmod d,
\end{equation}
where $\theta_u,\theta_w,\theta_2,\theta_1,\theta_0$ are integer coefficients; this form appears when trying to express the function $u_q(aq+\lambda)$ in terms of $u_q(a)$.

Our starting point is the possibility of expressing the number of solutions of \eqref{eq:totale} in $[0,qN)$ in terms of the number of solutions in $[0,N)$ of equations of the same form, but with different coefficients (Proposition \ref{prop:trasf-uq}). While we are not able to prove that the density $\delta(q,d;\theta_u,\theta_w,\theta_2,\theta_1,\theta_0)$ of the solutions of \eqref{eq:totale} exists for every choice of $q$, $d$ and the coefficients, we can use this recurrence relation to associate to the equation \eqref{eq:totale} (for any fixed $q$ and $d$) a stochastic matrix $P:=P(q,d)$ and a Markov chain, studying which we can calculate these densities in several cases.

More precisely, fix $q$ and $d$. Suppose that $\delta(q,d;\theta_u,\theta_w,\theta_2,\theta_1,\theta_0)$ exists for every choice of $\theta_u,\theta_w,\theta_2,\theta_1,\theta_0$. We prove that:
\begin{itemize}
\item (Theorem \ref{teor:rational}) every $\delta(q,d;\theta_u,\theta_w,\theta_2,\theta_1,\theta_0)$ is rational;
\item (Theorem \ref{teor:thetaw=0})
\begin{equation*}
\delta(q,d;0,\psi,0,\theta,x)=\begin{cases}
\inv{d}\gcd(\psi,\theta,d) & \text{if~}\gcd(\psi,\theta,d)|\gcd(x,d)\\
0 & \text{otherwise};
\end{cases}
\end{equation*}
\item (Theorem \ref{teor:d|q}) if $d|q$, then ($\varphi$ is the Euler function)
\begin{equation*}
\delta(q,d;1,0,0,0,x)=\inv{d^2}\sum_{f|\gcd(x,d)}f\cdot\varphi\left(\frac{d}{f}\right);
\end{equation*}
\item (Theorem \ref{teor:dqcoprimi}) if $d$ and $q$ are coprime and $\theta_u$ is coprime with $d$, then
\begin{equation*}
\delta(q,d;\theta_u,\theta_w,0,0,x)=\inv{d}.
\end{equation*}
\end{itemize}

Section \ref{sect:Int} translates the result obtained back to the setting of integer-valued polynomials.

\section{The general transformation}
Our first step is expressing $w_q(aq+\lambda)$ and $u_q(aq+\lambda)$ as a function of $a$ and $\lambda$.
\begin{lemma}\label{lemma:wqaq}
Let $q$ be a positive integer, and let $\lambda\in\{0,\ldots,q-1\}$. Then,
\begin{equation*}
w_q(aq+\lambda)=w_q(a)+a
\end{equation*}
for every $a\inN$.
\end{lemma}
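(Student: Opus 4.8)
The plan is to work directly from the series definition $w_q(n)=\sum_{k\geq 1}\lfloor n/q^k\rfloor$ given above, and to split the sum according to whether $k=1$ or $k\geq 2$. For $k=1$ one has $\lfloor(aq+\lambda)/q\rfloor=a+\lfloor\lambda/q\rfloor=a$, using $0\leq\lambda<q$. Thus the whole content of the lemma is the claim that the tail $\sum_{k\geq 2}\lfloor(aq+\lambda)/q^k\rfloor$ equals $\sum_{j\geq 1}\lfloor a/q^{j}\rfloor=w_q(a)$.

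To establish this I would prove, for each fixed $k\geq 2$, the termwise identity $\lfloor(aq+\lambda)/q^k\rfloor=\lfloor a/q^{k-1}\rfloor$. Write $a=bq^{k-1}+r$ with $0\leq r<q^{k-1}$, so that $b=\lfloor a/q^{k-1}\rfloor$; then $aq+\lambda=bq^k+(rq+\lambda)$, and since $0\leq rq+\lambda\leq(q^{k-1}-1)q+(q-1)=q^k-1<q^k$ we get $\lfloor(aq+\lambda)/q^k\rfloor=b=\lfloor a/q^{k-1}\rfloor$. Reindexing $k-1\mapsto j$ in the tail then gives exactly $w_q(a)$, and adding the $k=1$ term back yields $w_q(aq+\lambda)=w_q(a)+a$. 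The only point that requires any care is the estimate $rq+\lambda<q^k$, i.e. checking that the perturbation by $\lambda<q$ never pushes the quotient past the next integer; beyond this the argument is pure bookkeeping and there is no genuine obstacle.

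As a sanity check I would also note an alternative via Legendre's formula $w_q(n)=(n-s_q(n))/(q-1)$, where $s_q$ denotes the sum of the base-$q$ digits: since the base-$q$ expansion of $aq+\lambda$ is obtained from that of $a$ by appending the digit $\lambda\in\{0,\ldots,q-1\}$, we have $s_q(aq+\lambda)=s_q(a)+\lambda$, whence $w_q(aq+\lambda)=\frac{aq+\lambda-s_q(a)-\lambda}{q-1}=\frac{a(q-1)+(a-s_q(a))}{q-1}=a+w_q(a)$. This is shorter but depends on the digit-sum identity; the first argument is self-contained, so that is the one I would write out in detail.
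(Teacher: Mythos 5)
Your argument is correct, but it takes a different route from the paper. You work from the second expression in the paper's definition, $w_q(n)=\sum_{k\geq 1}\lfloor n/q^k\rfloor$, split off the $k=1$ term (which gives $a$ since $0\leq\lambda<q$), and prove the termwise identity $\lfloor(aq+\lambda)/q^k\rfloor=\lfloor a/q^{k-1}\rfloor$ for $k\geq 2$ by writing $a=bq^{k-1}+r$ and checking $0\leq rq+\lambda\leq q^k-1$; reindexing the tail then yields $w_q(a)$. That estimate is exactly right, so the proof is complete. The paper instead argues from the primary definition $w_q(n)=\sum_{i=0}^n v_q(i)$: since $v_q(i)=0$ unless $q\mid i$, the sum over $i\leq aq+\lambda$ collapses to $\sum_{j=0}^a v_q(jq)$, and the identity $v_q(jq)=1+v_q(j)$ immediately gives $a+w_q(a)$. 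The paper's proof is a two-line computation at the level of valuations and needs no case analysis; yours is equally elementary but leans on the floor-sum formula (stated without proof in the paper's definition, so you are entitled to it) and replaces the valuation identity with a division-algorithm bookkeeping step. Your alternative check via $w_q(n)=(n-s_q(n))/(q-1)$ and $s_q(aq+\lambda)=s_q(a)+\lambda$ is also valid and is consistent with the digit-sum formula the paper quotes later in Section 3, though as you note it imports that identity rather than being self-contained.
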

\begin{proof}
Since $v_q(i)=0$ if $i$ is not a multiple of $q$, we have
\begin{equation*}
w_q(aq+\lambda)=\sum_{i=0}^{aq+\lambda}v_q(i)=\sum_{j=0}^av_q(jq).
\end{equation*}
Moreover, $v_q(jq)=1+v_q(j)$, and thus
\begin{equation*}
w_q(aq+\lambda)=\sum_{j=0}^a(1+v_q(j))=a+\sum_{j=0}^av_q(j)=a+w_q(a),
\end{equation*}
as claimed.
\end{proof}

\begin{lemma}\label{lemma:uqaq}
Let $q$ be a positive integer, and let $\lambda\in\{0,\ldots,q-1\}$. Then,
\begin{equation*}
\begin{split}
u_q(aq+\lambda) & =qu_q(a)+(\lambda+1-q)w_q(a)+\frac{q}{2}a^2+\left(\lambda+1-\frac{q}{2}\right)a=\\
& =qu_q(a)+	(\lambda+1-q)w_q(a)+q\frac{a(a+1)}{2}+(\lambda+1-q)a
\end{split}
\end{equation*}
for every $a\inN$.
\end{lemma}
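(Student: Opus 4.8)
The plan is to compute $u_q(aq+\lambda)=\sum_{i=0}^{aq+\lambda}w_q(i)$ directly, by grouping the summation index $i$ according to its residue modulo $q$. Writing $i=jq+\mu$ with $0\le\mu\le q-1$, I observe that as $i$ runs over $\{0,1,\ldots,aq+\lambda\}$ the pair $(j,\mu)$ runs over all pairs with $0\le j\le a-1$ and $0\le\mu\le q-1$ (the ``full'' blocks), together with the pairs $j=a$, $0\le\mu\le\lambda$ (the last, partial block, which is where $\lambda\le q-1$ is used). This splits the sum as
\begin{equation*}
u_q(aq+\lambda)=\sum_{j=0}^{a-1}\sum_{\mu=0}^{q-1}w_q(jq+\mu)+\sum_{\mu=0}^{\lambda}w_q(aq+\mu).
\end{equation*}

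Next I would invoke Lemma \ref{lemma:wqaq}, which gives $w_q(jq+\mu)=w_q(j)+j$ for every $\mu\in\{0,\ldots,q-1\}$; in particular the inner summands do not depend on $\mu$. Hence the full blocks contribute $q\sum_{j=0}^{a-1}\bigl(w_q(j)+j\bigr)$ and the partial block contributes $(\lambda+1)\bigl(w_q(a)+a\bigr)$. I would then rewrite the two leftover sums via the definitions: $\sum_{j=0}^{a-1}w_q(j)=u_q(a-1)=u_q(a)-w_q(a)$ and $\sum_{j=0}^{a-1}j=\tfrac{a(a-1)}{2}$.

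Combining these, one gets
\begin{equation*}
u_q(aq+\lambda)=q\bigl(u_q(a)-w_q(a)\bigr)+q\tfrac{a(a-1)}{2}+(\lambda+1)w_q(a)+(\lambda+1)a,
\end{equation*}
and collecting the coefficients of $w_q(a)$ and of $a$, while expanding $q\tfrac{a(a-1)}{2}=\tfrac{q}{2}a^2-\tfrac{q}{2}a$, yields the first displayed expression in the statement. The second displayed expression is then just the elementary rearrangement $\tfrac{q}{2}a^2+\bigl(\lambda+1-\tfrac{q}{2}\bigr)a=q\tfrac{a(a+1)}{2}+(\lambda+1-q)a$.

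There is no serious obstacle here: the proof is a bookkeeping argument. The only points requiring care are the boundary handling — correctly isolating the partial last block and remembering that the full blocks run only up to $j=a-1$, so that one uses $u_q(a-1)=u_q(a)-w_q(a)$ rather than $u_q(a)$ — and the sign when passing between $\tfrac{a(a-1)}{2}$ and $\tfrac{a(a+1)}{2}$ in the final simplification.
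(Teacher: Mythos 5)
Your proof is correct and takes essentially the same route as the paper: both decompose $\sum_{i=0}^{aq+\lambda}w_q(i)$ into the full blocks of length $q$ plus the partial last block, apply Lemma \ref{lemma:wqaq} to each block, and use $u_q(a-1)=u_q(a)-w_q(a)$ together with $\sum_{j=0}^{a-1}j=\tfrac{a(a-1)}{2}$. The only cosmetic difference is that the paper packages the full blocks as the intermediate quantity $u_q(aq-1)$ before adding the contribution $(\lambda+1)w_q(aq)$, whereas you split the sum in a single step.
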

\begin{proof}
We start by calculating $u_q(aq-1)$. We have
\begin{equation*}
u_q(aq-1)=\sum_{i=0}^{aq-1}w_q(i)=\sum_{b=0}^{a-1}\sum_{t=0}^{q-1}w_q(bq+t).
\end{equation*}
Since $w_q(bq+t)=w_q(bq)$ for every $t\in\{0,\ldots,q-1\}$, this implies that
\begin{equation*}
u_q(aq-1)=\sum_{b=0}^{a-1}qw_q(bq)=q\sum_{b=0}^{a-1}(w_q(b)+b)=qu_q(a-1)+q\frac{(a-1)a}{2}.
\end{equation*}
Hence, using again $w_q(bq+t)=w_q(bq)$, we have
\begin{equation*}
\begin{array}{rcl}
u_q(aq+\lambda) & = & u_q(aq-1)+(\lambda+1)w_q(aq)=\\[1em]
& = & qu_q(a-1)+q\frac{(a-1)a}{2}+(\lambda+1)(w_q(a)+a)=\\[1em]
& = & q(u_q(a)-w_q(a))+\frac{(a-1)aq}{2}+(\lambda+1)w_q(a)+(\lambda+1)a,
\end{array}
\end{equation*}
rearranging which we have our claim.
\end{proof}

The previous lemmas suggest to consider the more general equation
\begin{equation}\label{eq:tot-uq}
\theta_uu_q(n)+\theta_ww_q(n)+\theta_2\frac{n(n+1)}{2}+\theta_1n+\theta_0\equiv 0\bmod d,
\end{equation}
as $\theta_u,\theta_w,\theta_2,\theta_1,\theta_0$ vary in $\insZ$. Clearly, if $\theta'_u\equiv\theta_u\bmod d$, and analogously for $\theta'_w,\theta'_2,\theta'_1$ and $\theta'_0$, then $n$ is a solution of \eqref{eq:tot-uq} if and only if it is a solution of
\begin{equation*}
\theta'_uu_q(n)+\theta'_ww_q(n)+\theta'_2\frac{n(n+1)}{2}+\theta'_1n+\theta'_0\equiv 0\bmod d;
\end{equation*}
for this reason, we will sometimes consider Equation \eqref{eq:tot-uq} as having the coefficients $\theta_u,\theta_w,\theta_2,\theta_1,\theta_0$ in $\insZ/d\insZ$; this should not cause confusion.

Note that, if we were using $n^2$ instead of $n(n+1)/2$, we may need to consider also half-integer values of $\theta_2$ and $\theta_1$, and the situation may become troublesome when $d$ is even.

Let now $\mathbf{s}:=(\theta_u,\theta_w,\theta_2,\theta_1,\theta_0)\inZ^5$. For any $A\inN$, we denote by $\gamma(A,q,d;\theta_u,\theta_w,\theta_2,\theta_1,\theta_0)$, or by $\gamma(A,q,d;\mathbf{s})$, the number of natural numbers $n<A$ that are solution of \eqref{eq:tot-uq}.

\begin{prop}\label{prop:trasf-uq}
Let $q,d\geq 2$ be positive integers. For every $A\inN$, and every $\mathbf{s}\inZ^5$, we have
\begin{equation}\label{eq:gamma-sumlambda}
\gamma(qA,q,d;\mathbf{s})=\sum_{\lambda=0}^{q-1}\gamma(A,q,d;\mathbf{s}M_\lambda)
\end{equation}
where
\begin{equation*}
M_\lambda:=\begin{pmatrix}
q & \lambda-q+1 & q & \lambda-q+1 & 0\\
0 & 1 & 0 & 1 & 0\\
0 & 0 & q^2 & \lambda q-\frac{q(q-1)}{2} & \frac{\lambda(\lambda+1)}{2}\\
0 & 0 & 0 & q & \lambda\\
0 & 0 & 0 & 0 & 1
\end{pmatrix}
\end{equation*}
for every $\lambda$.
\end{prop}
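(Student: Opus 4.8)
The plan is to reduce the statement about $[0,qA)$ to a statement about $[0,A)$ by writing every $n < qA$ uniquely as $n = aq + \lambda$ with $0 \le a < A$ and $0 \le \lambda \le q-1$, and then tracking how the ``state vector'' of the equation transforms under this substitution. Concretely, I would introduce the column vector $\mathbf{x}(n) := (u_q(n), w_q(n), n(n+1)/2, n, 1)^{\trasposta}$, so that $n$ is a solution of \eqref{eq:tot-uq} exactly when $\mathbf{s}\cdot\mathbf{x}(n) \equiv 0 \bmod d$. The first key step is to prove, for each fixed $\lambda \in \{0,\ldots,q-1\}$, an identity of the shape $\mathbf{x}(aq+\lambda) = M_\lambda^{\trasposta}\,\mathbf{x}(a)$ (equivalently $\mathbf{s}M_\lambda \cdot \mathbf{x}(a) = \mathbf{s}\cdot\mathbf{x}(aq+\lambda)$), valid for all $a \in \insN$.

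To establish that identity I would verify it coordinate by coordinate, reading off the rows of $M_\lambda$. The $w_q$-row is exactly Lemma~\ref{lemma:wqaq}: $w_q(aq+\lambda) = w_q(a) + a$, which matches the second row $(0,1,0,1,0)$. The $u_q$-row is exactly Lemma~\ref{lemma:uqaq} in its second displayed form, $u_q(aq+\lambda) = qu_q(a) + (\lambda+1-q)w_q(a) + q\frac{a(a+1)}{2} + (\lambda+1-q)a$, which matches the first row $(q,\ \lambda-q+1,\ q,\ \lambda-q+1,\ 0)$. The remaining three rows are elementary: $\frac{(aq+\lambda)(aq+\lambda+1)}{2} = q^2\frac{a(a+1)}{2} + \bigl(\lambda q - \tfrac{q(q-1)}{2}\bigr)a + \frac{\lambda(\lambda+1)}{2}$ (expand and collect powers of $a$), $aq+\lambda = q\cdot a + \lambda$, and $1 = 1$, giving the last three rows of $M_\lambda$ respectively. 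So this step is a routine but careful polynomial bookkeeping; the only mild subtlety is confirming the coefficient $\lambda q - \frac{q(q-1)}{2}$ of $a$ in the quadratic term, which one gets from $\frac{(aq)^2 + aq(2\lambda+1) + \lambda^2+\lambda}{2}$ after subtracting $q^2\frac{a^2+a}{2}$.

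Once the vector identity $\mathbf{s}M_\lambda\cdot\mathbf{x}(a) = \mathbf{s}\cdot\mathbf{x}(aq+\lambda)$ is in hand, the counting step is immediate: for fixed $\lambda$, as $a$ ranges over $\{0,\ldots,A-1\}$ the number of $a$ with $\mathbf{s}M_\lambda\cdot\mathbf{x}(a)\equiv 0\bmod d$ is by definition $\gamma(A,q,d;\mathbf{s}M_\lambda)$, and this counts precisely the $n = aq+\lambda < qA$ of residue $\lambda$ modulo $q$ that solve \eqref{eq:tot-uq}. Summing over the $q$ residue classes $\lambda = 0,\ldots,q-1$ partitions $\{0,\ldots,qA-1\}$ and yields $\gamma(qA,q,d;\mathbf{s}) = \sum_{\lambda=0}^{q-1}\gamma(A,q,d;\mathbf{s}M_\lambda)$, which is \eqref{eq:gamma-sumlambda}.

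The main obstacle, such as it is, is purely organizational rather than conceptual: making sure the row/column conventions are consistent, i.e.\ that multiplying the row vector $\mathbf{s}$ on the right by $M_\lambda$ corresponds to applying $M_\lambda^{\trasposta}$ to the column vector $\mathbf{x}$, and that the order of the five coordinates $(u_q, w_q, \binom{n+1}{2}, n, 1)$ is the same everywhere. There is no analytic difficulty and no issue of existence of limits here — this proposition is an exact finite identity, and the substitution $n = aq+\lambda$ does all the work.
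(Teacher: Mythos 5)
Your proposal is correct and follows essentially the same route as the paper: verify row by row that $\mathbf{s}M_\lambda\cdot\mathbf{x}(a)=\mathbf{s}\cdot\mathbf{x}(aq+\lambda)$ using Lemmas \ref{lemma:wqaq} and \ref{lemma:uqaq} together with the elementary expansion of $\frac{(aq+\lambda)(aq+\lambda+1)}{2}$, then count solutions by residue class $\lambda$ modulo $q$ via the equivalence $aq+\lambda<qA\iff a<A$. No gaps; the coefficient $\lambda q-\frac{q(q-1)}{2}$ and the partition argument are exactly as in the paper's proof.
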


\begin{proof}
Let $\mathbf{s}:=(\theta_u,\theta_w,\theta_2,\theta_1,\theta_0)$. For any $\lambda\in\{0,\ldots,q-1\}$, let $\gamma_{(\lambda)}(A,q,d;\mathbf{s})$ be the number of solutions to Equation \eqref{eq:tot-uq} that are smaller than $A$ and that are congruent to $\lambda$ modulo $q$.

Each row of $M_\lambda$ is the expansion of $u_q(n),w_q(n),\frac{n(n+1)}{2},n,1$ in terms of $u_q(a),w_q(a),\frac{a(a+1)}{2},a,1$ when $n=aq+\lambda$: indeed, for $u_q$ and $w_q$ it follows from Lemmas \ref{lemma:wqaq} and \ref{lemma:uqaq}, and it is obvious for $n$ and 1. Moreover,
\begin{equation*}
\frac{(aq+\lambda)(aq+\lambda+1)}{2}=\frac{q^2}{2}a^2+\frac{q^2}{2}a-\frac{q^2}{2}a+\left(q\lambda+\frac{q}{2}\right)a+\frac{\lambda(\lambda+1)}{2}
\end{equation*}
which gives the third row of $M_\lambda$ after rearrangement.

Now $aq+\lambda<Aq$ if and only if $a<A$; this means exactly that
\begin{equation*}
\gamma_{(\lambda)}(qA,q,d;\mathbf{s})=\gamma(A,q,d;\mathbf{s}M_\lambda).
\end{equation*}
Summing over $\lambda$ we get the claim.
\end{proof}

As remarked before the statement of the proposition, we can consider $\mathbf{s}$ to be an element of $(\insZ/d\insZ)^5$ instead of $\insZ^5$; in particular, we can define $\gamma(A,q,d;\mathbf{s})$ even with $\mathbf{s}\in(\insZ/d\insZ)^5$, and Proposition \ref{prop:trasf-uq} carries over without problems, with the only difference that each $M_\lambda$ must be considered as a matrix over $\insZ/d\insZ$.

This convention is useful because it makes the space of possible $\mathbf{s}$ finite, and in particular it allows to rearrange Equation \eqref{eq:gamma-sumlambda} in matrix form. Indeed, for every $\mathbf{s},\mathbf{t}\in(\insZ/d\insZ)^5$, let $\mu(\mathbf{s},\mathbf{t})$ be the number of $\lambda\in\{0,\ldots,q-1\}$ such that $\mathbf{t}=\mathbf{s}M_\lambda$. Then, for every $\mathbf{s}\in(\insZ/d\insZ)^5$ we have the finite sum
\begin{equation*}
\gamma(qA,q,d;\mathbf{s})=\sum_{\mathbf{t}\in(\insZ/d\insZ)^5}\mu(\mathbf{s},\mathbf{t})\gamma(A,q,d;\mathbf{t})
\end{equation*}
Let now 
\begin{equation*}
\widetilde{\gamma}(A,q,d;\mathbf{s}):=\frac{\gamma(A,q,d;\mathbf{s})}{A},
\end{equation*}
and let $\boldsymbol{\widetilde{\gamma}}(A,q,d)$ be the column vector composed of the $\widetilde{\gamma}(A,q,d;\mathbf{s})$, as $\mathbf{s}$ ranges in $(\insZ/d\insZ)^5$. Then, the previous equality can be written as
\begin{equation}\label{eq:widetilde-P}
\boldsymbol{\widetilde{\gamma}}(qA,q,d)=P(q,d)\boldsymbol{\widetilde{\gamma}}(A,q,d)
\end{equation}
where $P(q,d):=(\mu(\mathbf{s},\mathbf{t})/q)_{\mathbf{s},\mathbf{t}}$ is a (rational) matrix of order $d^5$. This matrix is a \emph{stochastic matrix}, i.e., each entry is nonnegative and the sum of each of its row is 1: indeed, the sum of $\mu(\mathbf{s},\mathbf{t})$, as $\mathbf{s}$ is fixed and $\mathbf{t}$ varies, must be $q$, since for each $\lambda$ there is a $\mathbf{t}$ such that $\mathbf{t}=\mathbf{s}M_\lambda$.

We introduce the following definition.
\begin{defin}\label{defin:density}
Let $q,d\geq 2$ be positive integers. The \emph{density of solutions} for $\mathbf{s}\inZ^5$ (or $\mathbf{s}\in(\insZ/d\insZ)^5$) with respect to $q$ and $d$ is
\begin{equation*}
\delta(q,d;\mathbf{s}):=\lim_{N\to\infty}\widetilde{\gamma}(N,q,d;\mathbf{s})= \lim_{N\to\infty}\frac{\gamma(N,q,d;\mathbf{s})}{N},
\end{equation*}
provided that the limit exists; if $q$ and $d$ are clear from the context, we also write $\delta(\mathbf{s})$ for $\delta(q,d;\mathbf{s})$. The column vector $(\delta(\mathbf{s}))_{\mathbf{s}}$ is called the \emph{vector of densities} of the solutions of Equation \eqref{eq:tot-uq} and is denoted by $\boldsymbol{\delta}(q,d)$ (or simply $\boldsymbol{\delta}$).
\end{defin}

Fix now the first four coefficients $\theta_u,\theta_w,\theta_2,\theta_1$. If the density $\delta(q,d;\theta_u,\theta_w,\theta_2,\theta_1,x)$ exists for every $x\inZ$, we say that the function
\begin{equation*}
f:n\mapsto \theta_uu_q(n)+\theta_ww_q(n)+\theta_2\frac{n(n+1)}{2}+\theta_1n
\end{equation*}
\emph{has a limit distribution modulo $d$} (and we call the assignment $x\mapsto\delta(q,d;\theta_u,\theta_w,\theta_2,\theta_1,x)$ the \emph{limit distribution}). If the densities are all equal (and so are equal to $1/d$), we say that $f$ is \emph{uniformly distributed modulo $d$} (see e.g. \cite{unifdistrib}).

\begin{lemma}\label{lemma:autoval}
Let $M$ be a square matrix of order $n$ over $\insC$; suppose that each eigenvalue $\lambda$ of $M$ satisfies $|\lambda|=1$, $\lambda\neq 1$. If $\mathbf{v}$ is a vector such that $M^k\mathbf{v}$ has a limit when $k\to\infty$, then $\mathbf{v}=0$.
\end{lemma}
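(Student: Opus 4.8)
The plan is to reduce, via the Jordan canonical form, to the case of a single Jordan block, and then to read off the coordinates of $M^k\mathbf{v}$ explicitly. Write $M=SJS^{-1}$ with $S$ invertible and $J=J_1\oplus\cdots\oplus J_r$ a direct sum of Jordan blocks, say $J_i$ of size $m_i$ with eigenvalue $\lambda_i$ (so $|\lambda_i|=1$ and $\lambda_i\neq 1$ for each $i$). Since $S$ is a linear homeomorphism, $M^k\mathbf{v}=SJ^k(S^{-1}\mathbf{v})$ converges if and only if $J^k(S^{-1}\mathbf{v})$ does, and $\mathbf{v}=0$ if and only if $S^{-1}\mathbf{v}=0$. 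Decomposing $S^{-1}\mathbf{v}=(\mathbf{w}_1,\ldots,\mathbf{w}_r)$ according to the block structure, we have $J^k(S^{-1}\mathbf{v})=(J_1^k\mathbf{w}_1,\ldots,J_r^k\mathbf{w}_r)$, which converges if and only if each $J_i^k\mathbf{w}_i$ does, and is zero for all $k$ precisely when each $\mathbf{w}_i$ is $0$. Hence it suffices to prove the statement when $M=J_\lambda=\lambda I+N$ is a single Jordan block of size $m$, with $|\lambda|=1$, $\lambda\neq 1$, and $N$ the nilpotent shift.

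The only scalar fact needed is that the sequence $(\lambda^k)_k$ does not converge: if $\lambda^k\to c$, then also $\lambda^{k+1}\to c$, whence $\lambda c=c$; since $|c|=\lim_k|\lambda|^k=1\neq 0$, this would force $\lambda=1$, contrary to hypothesis. Now expand $J_\lambda^k=\sum_{i=0}^{m-1}\binom{k}{i}\lambda^{k-i}N^i$ (with the convention $\binom{k}{i}=0$ for $i>k$), so that, writing $\mathbf{w}=(w_1,\ldots,w_m)^\trasposta$, the $j$-th coordinate of $J_\lambda^k\mathbf{w}$ is $\sum_{i=0}^{m-j}\binom{k}{i}\lambda^{k-i}w_{j+i}$. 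I would then show by downward induction on $j$ that $w_j=0$ for every $j$: the last coordinate of $J_\lambda^k\mathbf{w}$ is $\lambda^k w_m$, which converges only if $w_m=0$; and once $w_m=\cdots=w_{j+1}=0$ has been established, the $j$-th coordinate collapses to $\lambda^k w_j$, forcing $w_j=0$ as well. Therefore $\mathbf{w}=0$, completing the argument.

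I do not expect a genuine obstacle here. The only subtlety is that $M$ need not be diagonalizable, so that the entries of $M^k\mathbf{v}$ carry polynomial-in-$k$ factors $\binom{k}{i}$; this is handled transparently by annihilating the coordinates of $\mathbf{w}$ from the bottom upward, since each such binomial factor multiplies a coordinate $w_{j+i}$ with $i\geq 1$ that has already been shown to vanish, so it never interferes with the convergence of the coordinate currently under scrutiny.
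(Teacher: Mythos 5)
Your proof is correct and follows essentially the same route as the paper's: conjugate to a triangular form and argue, from the bottom coordinate upward, that each coordinate of the transformed vector must vanish because $\lambda^k$ has no limit when $|\lambda|=1$ and $\lambda\neq 1$. The paper gets by with a plain upper triangularization and simply picks the largest index with a nonzero entry, whereas you invoke the full Jordan form and a blockwise downward induction, but the mechanism is identical (and your explicit verification that $\lambda^k$ diverges is a detail the paper leaves implicit).
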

\begin{proof}
By conjugation, we can suppose that $M=(m_{ij})_{i,j}$ is an upper triangular matrix.

Suppose $\mathbf{v}=(v_1,\ldots,v_n)\neq 0$, and let $t$ be the biggest $i$ such that $v_i\neq 0$. Then, the $t$-th component of $M^k\mathbf{v}$ is equal to $m_{tt}^kv_t$; in particular, $m_{tt}^kv_t$ has a limit as $k\to\infty$. However, $m_{tt}$ is an eigenvalue of $M$, and thus it is a complex number of norm 1 different from 1; hence, $m_{tt}^k$ does not have a limit. This would imply $v_t=0$, against our choice of $t$. The claim is proved.
\end{proof}

\begin{teor}\label{teor:rational}
Let $q,d\geq 2$ be positive integers. If the vector of densities $\boldsymbol{\delta}(q,d)$ exists, then it is a right eigenvector of $P(q,d)$ with eigenvalue $1$, and all its entries are rational.
\end{teor}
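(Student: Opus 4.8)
The plan is to exhibit $\boldsymbol{\delta}$ as the limit of $P(q,d)^k$ applied to an explicit rational vector, and then to extract rationality from the spectral structure of the stochastic matrix $P(q,d)$, with Lemma \ref{lemma:autoval} doing the work of eliminating the ``oscillatory'' contributions.

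First I would iterate the recurrence \eqref{eq:widetilde-P}: for the vector $\mathbf{w}:=\boldsymbol{\widetilde{\gamma}}(1,q,d)$ one gets $\boldsymbol{\widetilde{\gamma}}(q^k,q,d)=P(q,d)^k\mathbf{w}$ for all $k\geq 0$. The vector $\mathbf{w}$ has entries in $\{0,1\}$ — its $\mathbf{s}$-entry is the number of $n<1$ solving \eqref{eq:tot-uq}, i.e. $1$ if $\theta_0\equiv 0\bmod d$ and $0$ otherwise — so in particular it is rational. Since by hypothesis $\boldsymbol{\widetilde{\gamma}}(N,q,d)\to\boldsymbol{\delta}$, specializing to $N=q^k$ gives $P(q,d)^k\mathbf{w}\to\boldsymbol{\delta}$. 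The eigenvector assertion is then immediate: $P(q,d)\boldsymbol{\delta}=P(q,d)\lim_k P(q,d)^k\mathbf{w}=\lim_k P(q,d)^{k+1}\mathbf{w}=\boldsymbol{\delta}$ (and $\boldsymbol{\delta}\neq 0$, since the entry indexed by $\mathbf{s}=(0,0,0,0,0)$ equals $\lim_N N/N=1$). Write $P:=P(q,d)$ from now on.

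For rationality, I would use two facts about the stochastic matrix $P$. First, all its eigenvalues lie in the closed unit disc, and since each power $P^k$ is again stochastic, $\|P^k\|_\infty=1$ for all $k$; hence the powers of $P$ are bounded, and every eigenvalue of modulus $1$ — in particular $1$ itself — is semisimple. Because $1$ is semisimple, $\insC^{d^5}=\ker(P-I)\oplus\mathrm{im}(P-I)$, both summands $P$-invariant; and because $P-I$ has rational entries, both summands are spanned by rational vectors, so the projection $\pi$ onto $\ker(P-I)$ along $\mathrm{im}(P-I)$ is a rational matrix that commutes with $P$. Decompose $\mathbf{w}=\mathbf{w}_U+\mathbf{w}_V$ with $\mathbf{w}_U:=\pi(\mathbf{w})\in\ker(P-I)$ (rational) and $\mathbf{w}_V\in\mathrm{im}(P-I)$. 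Since $P$ acts as the identity on $\ker(P-I)$, $P^k\mathbf{w}_U=\mathbf{w}_U$ for every $k$, so $\pi(\boldsymbol{\delta})=\lim_k\pi(P^k\mathbf{w})=\lim_k P^k\mathbf{w}_U=\mathbf{w}_U$. It remains to show that the other component $(I-\pi)(\boldsymbol{\delta})=\lim_k P^k\mathbf{w}_V$ is zero: inside $\mathrm{im}(P-I)$ every eigenvalue of $P$ is $\neq 1$, so split further (over $\insC$) into the part $V_1$ on which $P$ acts diagonalizably with eigenvalues of modulus $1$ and the part $V_{<1}$ on which $P$ has spectral radius $<1$; writing $\mathbf{w}_V=\mathbf{a}+\mathbf{b}$ accordingly, $P^k\mathbf{b}\to 0$, hence $P^k\mathbf{a}$ converges, hence $\mathbf{a}=0$ by Lemma \ref{lemma:autoval} applied to $P|_{V_1}$. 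Thus $P^k\mathbf{w}_V=P^k\mathbf{b}\to 0$, and therefore $\boldsymbol{\delta}=\pi(\boldsymbol{\delta})+(I-\pi)(\boldsymbol{\delta})=\mathbf{w}_U$ is rational.

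I expect the rationality part to be the real content, and within it the two things to get right are: (i) not stopping at ``$\boldsymbol{\delta}\in\ker(P-I)$'' — which alone does not force $\boldsymbol{\delta}$ rational, since a rational subspace can contain irrational vectors — but instead identifying $\boldsymbol{\delta}$ as $\pi$ of the \emph{specific} rational vector $\mathbf{w}$; and (ii) using the semisimplicity of the unit-circle eigenvalues of a stochastic matrix, both to produce a rational complement to $\ker(P-I)$ and, via Lemma \ref{lemma:autoval}, to discard the modulus-$1$, $\neq 1$ eigendirections. The remaining points — that kernel and image of a rational matrix have rational bases and are $P$-invariant, and that $P^k\to 0$ on $V_{<1}$ — are routine.
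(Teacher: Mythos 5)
Your proposal is correct and follows essentially the same route as the paper: iterate \eqref{eq:widetilde-P} along $N=q^k$ starting from the rational vector $\boldsymbol{\widetilde{\gamma}}(1,q,d)$, obtain the eigenvector identity by passing $P$ through the limit, and get rationality by splitting off the (semisimple) eigenvalue $1$, killing the modulus-one, non-unit part via Lemma \ref{lemma:autoval} and the remaining part via $P^k\to 0$ there. The only difference is bookkeeping: you work with the rational projection onto $\ker(P-I)$ along $\mathrm{im}(P-I)$ and perform the finer modulus split only over $\insC$, whereas the paper conjugates $P$ by a rational matrix into the three-block form with blocks $I$, $R$, $Q$; your variant is marginally more economical, since only the eigenvalue-$1$ projection ever needs to be rational.
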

\begin{proof}
For every $k\inN$, let $\boldsymbol{\widetilde{\gamma}}_k$ be the column vector whose entries are $\widetilde{\gamma}(q^k,q,d;\mathbf{s})$, as $\mathbf{s}$ ranges in $(\insZ/d\insZ)^5$. Then, \eqref{eq:widetilde-P} becomes
\begin{equation*}
\boldsymbol{\widetilde{\gamma}}_k=P\boldsymbol{\widetilde{\gamma}}_{k-1}=P^k\boldsymbol{\widetilde{\gamma}}_0.
\end{equation*}
Clearly, if $\boldsymbol{\widetilde{\gamma}}_k$ has a limit for $k\to\infty$ then it must be $\boldsymbol{\delta}:=\boldsymbol{\delta}(q,d)$; in particular, the first equality of the previous equation becomes
\begin{equation*}
\boldsymbol{\delta}=P\boldsymbol{\delta},
\end{equation*}
and thus $\boldsymbol{\delta}$ is an eigenvector of $P$ with eigenvalue 1.

Moreover, the existence of $\boldsymbol{\delta}$ implies that $P^k\boldsymbol{\widetilde{\gamma}}_0$ has a limit as $k\to\infty$. Being $P:=P(q,d)$ a stochastic matrix with rational entries, the algebraic and geometric multiplicities of its eigenvalue 1 coincide (see e.g. \cite[Section 9.4, Fact 1(b)]{handbook-LinearAlgebra} or \cite[p.696]{MatrixAnalysis}), and we can find a rational matrix $A$ such that $A^{-1}PA$ is a block matrix
\begin{equation*}
N:=A^{-1}PA=\begin{pmatrix}I & 0 & 0\\
0 & R & 0\\
0 & 0 & Q\end{pmatrix},
\end{equation*}
where $I$ is the identity matrix, the eigenvalues of $R$ have norm 1 but are different from 1 and the norm of each eigenvalue of $Q$ is smaller than 1. The limit $P^k\boldsymbol{\widetilde{\gamma}}_0\longrightarrow\boldsymbol{\delta}$ can we rewritten as $N^k(A^{-1}\boldsymbol{\widetilde{\gamma}}_0)\longrightarrow A^{-1}\boldsymbol{\delta}$. Let $\mathbf{v}:=A^{-1}\boldsymbol{\widetilde{\gamma}}_0$; then,
\begin{equation*}
N^k\mathbf{v}=\begin{pmatrix}I & 0 & 0\\
0 & R^k & 0\\
0 & 0 & Q^k\end{pmatrix}\begin{pmatrix}\mathbf{v}_1\\ \mathbf{v_2}\\ \mathbf{v_3}\end{pmatrix}=\begin{pmatrix}\mathbf{v}_1\\ R^k\mathbf{v_2}\\ Q^k\mathbf{v_3}\end{pmatrix},
\end{equation*}
where $\mathbf{v}_1$, $\mathbf{v}_2$ and $\mathbf{v}_3$ are subvectors of $\mathbf{v}$ of appropriate length. The existence of the limit implies that both $R^k\mathbf{v_2}$ and $Q^k\mathbf{v_3}$ have limit as $k\to\infty$. Since $R$ satisfies the hypothesis of Lemma \ref{lemma:autoval}, we have $\mathbf{v}_2=0$; on the other hand, by construction, $Q^k\longrightarrow 0$ \cite[p.617]{MatrixAnalysis}, and thus $Q^k\mathbf{v}_3\longrightarrow 0$.

Both $A$ and $\boldsymbol{\widetilde{\gamma}}_0$ have rational entries (since $\gamma(1,q,d;\mathbf{s})$ is either 1 or 0). Hence, $\mathbf{v}$ has rational entries, and by the previous reasoning so does the limit of $N^k\mathbf{v}$, i.e., $A^{-1}\boldsymbol{\delta}$; therefore, $\boldsymbol{\delta}$ has rational entries, as claimed.
\end{proof}

The fact that the entries are rational supports part of \cite[Conjecture 2.2(1)]{elliott-polya}.

As observed before Definition \ref{defin:density}, $P:=P(q,d)$ is a stochastic matrix: hence, we can interpret it in a probabilistic way. A (discrete) \emph{Markov chain} $\mathcal{M}$ is a family of random variables $\{X_n\}_{n\inN}$, whose range is a finite set $S:=\{s_1,\ldots,s_k\}$ (called the \emph{state space} of $\mathcal{M}$), such that 
\begin{equation*}
P(X_{n+1}=t_{n+1}|X_n=t_n,X_{n-1}=t_{n-1},\ldots,X_0=t_0)=P(X_{n+1}=t_{n+1}|X_n=t_n)
\end{equation*}
for all $n$ and all $t_1,\ldots,t_{n+1}\in S$. If, furthermore, the probability $P(X_{n+1}=t_{n+1}|X_n=t_n)$ of going from $t_n$ to $t_{n+1}$ does not depend on $n$, then $\mathcal{M}$ is said to be \emph{time-homogeneous}, and the matrix $M:=(m_{ij})_{i,j}$, where $m_{ij}:=P(X_{n+1}=s_j|X_n=s_i)$, is a stochastic matrix, called the \emph{transition matrix} of $\mathcal{M}$.

Conversely, to every stochastic matrix $M=(m_{ij})_{i,j}$ of order $k$ is associated a discrete, time-homogeneous Markov chain $\mathcal{M}$ on a state space $S=\{s_1,\ldots,s_k\}$ of cardinality $k$, such that $P(X_{n+1}=s_j|X_n=s_i)=m_{ij}$ for all $n$, that is, such that $M$ is the transition matrix of $\mathcal{M}$. When $M$ and $\mathcal{M}$ are linked in this way, we call $\mathcal{M}$ the Markov chain \emph{represented} by $M$. See e.g. \cite[Section 8.4]{MatrixAnalysis} for further details.

When $M=P(q,d)$, we denote the Markov chain arising in this way by $\markov(q,d)$; more explicitly, $\markov(q,d)$ is the Markov chain such that the probability of going from state $\mathbf{s}$ to state $\mathbf{t}$ is $\mu(\mathbf{s},\mathbf{t})/q$.


Let $\mathcal{M}$ be a Markov chain with state space $X$, and let $i,j\in X$. We say that $j$ is \emph{reachable} from $i$ (or that $i$ \emph{leads to} $j$) if there is a $k\inN$ such that $P(X_{n+k}=j|X_n=i)>0$, that is, if the probability of going from $i$ to $j$ in $k$ steps is positive. We say that $i$ and $j$ are \emph{communicating} (and we write $i\leftrightarrow j$) if $i$ is reachable from $j$ and $j$ is reachable from $i$. The relation ``$\leftrightarrow$'' is an equivalence relation; if $C$ is an equivalence class, we say that $C$ is \emph{ergodic} if, when $i\in C$ and $j$ is reachable from $i$, then also $j\in C$, that is, if one the chain arrives in $C$ then it cannot leave $C$; equivalently, if $P(X_{n+k}=l|X_n=i)=0$ for all $l\notin C$ and all $k\inN$. A state is \emph{ergodic} if the equivalence class it belongs to is an ergodic class.
\begin{prop}\label{prop:markov-autovett}
Let $P$ be a stochastic matrix, and let $\mathbf{v}:=(v_1,\ldots,v_n)^\trasposta$ be such that $\mathbf{v}=P\mathbf{v}$. If $i$ and $j$ belong to the same ergodic class, then $v_i=v_j$.
\end{prop}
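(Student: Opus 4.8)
The plan is to exploit the probabilistic meaning of the relation $\mathbf{v}=P\mathbf{v}$: if $P$ is the transition matrix of a Markov chain $\mathcal{M}=\{X_n\}$, then the condition $\mathbf{v}=P\mathbf{v}$ says precisely that $v_i=\sum_j P(X_{n+1}=j\mid X_n=i)\,v_j$, i.e. the function $i\mapsto v_i$ is \emph{harmonic} for $\mathcal{M}$. Iterating, $\mathbf{v}=P^k\mathbf{v}$ for all $k$, so $v_i=\sum_j P(X_{n+k}=j\mid X_n=i)\,v_j=\mathbb{E}[v_{X_k}\mid X_0=i]$ for every $k$.

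First I would restrict attention to the ergodic class $C$ containing $i$ and $j$. By definition of ergodicity, once the chain starts in $i\in C$ it stays in $C$ forever, so $P(X_k\in C\mid X_0=i)=1$ for all $k$, and the expectation $v_i=\mathbb{E}[v_{X_k}\mid X_0=i]$ only involves values $v_l$ with $l\in C$. Next, I would use that an ergodic (= closed communicating) class is, on its own, an irreducible Markov chain, hence has a unique stationary distribution $\pi$ supported on all of $C$, and that the Cesàro averages $\frac1K\sum_{k=1}^K P(X_k=l\mid X_0=i)$ converge to $\pi_l>0$ for every $l\in C$ (this is the standard ergodic theorem for finite Markov chains — I would cite \cite{MatrixAnalysis} or \cite{handbook-LinearAlgebra}, in the same spirit as the facts already quoted in the proof of Theorem~\ref{teor:rational}). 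Averaging the identity $v_i=\mathbb{E}[v_{X_k}\mid X_0=i]=\sum_{l\in C}P(X_k=l\mid X_0=i)\,v_l$ over $k=1,\dots,K$ and letting $K\to\infty$ gives $v_i=\sum_{l\in C}\pi_l v_l$, and the right-hand side does not depend on the starting state $i\in C$. Hence $v_i=v_j$ for all $i,j\in C$.

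Alternatively — and this may be cleaner if the paper prefers a purely linear-algebraic argument consistent with Lemma~\ref{lemma:autoval} and Theorem~\ref{teor:rational} — I would argue via the maximum principle: let $M=\max_{l\in C}v_l$, attained at some $i_0\in C$; from $v_{i_0}=\sum_{l\in C}P(X_{n+1}=l\mid X_n=i_0)v_l$ with the weights summing to $1$, every $l$ with positive transition probability from $i_0$ must also satisfy $v_l=M$; since $C$ is communicating, iterating shows $v_l=M$ for every $l$ reachable from $i_0$, which is all of $C$. So $i\mapsto v_i$ is constant on $C$, giving $v_i=v_j$.

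The only real subtlety is making sure the averaged transition probabilities stay inside $C$ and converge to a strictly positive stationary vector; this is exactly where ergodicity (closedness plus irreducibility of the class) is used, and it is the hypothesis of the proposition, so there is no genuine obstacle — the work is just invoking the standard convergence theorem for irreducible finite Markov chains (or, in the maximum-principle version, the finiteness and communication of $C$). I expect the write-up to be short.
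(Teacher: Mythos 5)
Your proof is correct, but it takes a genuinely different route from the paper's. The paper deduces the proposition directly from a structural fact about the $1$-eigenspace of a stochastic matrix, quoted from \cite[Section 9.4, Fact 1(g)]{handbook-LinearAlgebra}: that eigenspace has dimension equal to the number of ergodic classes $C_1,\ldots,C_t$ and admits a basis $\mathbf{u}^{C_1},\ldots,\mathbf{u}^{C_t}$ whose entry at an ergodic state $a\in C_k$ is $1$ for $\mathbf{u}^{C_k}$ and $0$ for the other basis vectors; hence for $i,j\in C_k$ both $v_i$ and $v_j$ equal the coefficient of $\mathbf{v}$ along $\mathbf{u}^{C_k}$, so they coincide. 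You instead read $\mathbf{v}=P\mathbf{v}$ as saying that $i\mapsto v_i$ is harmonic for the chain and argue either via the ergodic theorem (Ces\`aro convergence of the transition probabilities on the closed irreducible class $C$ to its stationary distribution $\pi$, giving $v_i=\sum_{l\in C}\pi_l v_l$, which does not depend on $i$) or via a maximum principle on $C$. Both of your arguments are sound: closedness of $C$ is what makes the averaging weights from a state of $C$ concentrate on $C$ (so only values $v_l$ with $l\in C$ enter), and communication within $C$ propagates the maximum to the whole class; note that positivity of $\pi$ is not actually needed in the Ces\`aro version, only that the limit is independent of the starting state. As for what each approach buys: the paper's proof is a two-line reduction to a quotable Perron--Frobenius-type fact, in line with its other citations; your maximum-principle variant is completely self-contained and elementary, using only finiteness of the state space and the definition of an ergodic class, while your Ces\`aro variant trades the paper's linear-algebra citation for the standard convergence theorem for irreducible finite chains, so it is comparable in length but probabilistic rather than matrix-theoretic in flavour.
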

\begin{proof}
This is essentially a consequence of the Perron-Frobenius theorem (see e.g. \cite[Section 9.2, Fact 5]{handbook-LinearAlgebra} or \cite[Chapter 8]{MatrixAnalysis}). Let $C_1,\ldots,C_t$ be the ergodic classes of the Markov chain associated to $P$. By \cite[Section 9.4, Fact 1(g)]{handbook-LinearAlgebra}, the space $V$ of right eigenvectors of $P$ with eigenvalue 1 has dimension $t$, and there is a basis $\underline{\mathbf{u}}:=\{\mathbf{u}^{C_1},\ldots,\mathbf{u}^{C_t}\}$ of $V$ such that, if $a$ is an ergodic state and $C_k$ is its equivalence class, then the $a$-th component of $\mathbf{u}^{C_l}$ is 1 if $l=k$ and 0 otherwise. In particular, if both $i,j\in C_k$ and $\mathbf{v}\in V$, then $v_i$ and $v_j$ are equal to the coefficients of $\mathbf{u}^{C_k}$ along the basis $\underline{\mathbf{u}}$ of $V$. In particular, $v_i=v_j$.
\end{proof}

\section{The case $\theta_u=\theta_2=0$}\label{sect:wq}
A function $f:\insN\longrightarrow\insR$ is said to be \emph{$q$-additive} if $f(0)=0$ and
\begin{equation*}
f(n)=\sum_{j\geq 0}f(a_{q,j}(n)q^j)\quad\text{for}\quad n=\sum_{j\geq 0}a_{q,j}(n)q^j,
\end{equation*}
where $a_{q,j}(n)\in\{0,\ldots,q-1\}$ are the digits of $n$ in base $q$. The prototype of $q$-additive functions is the sum of digits of $n$ in base $q$, which we denote by $s_q(n)$.

By \cite[Exercise II.8 and Lemma II.4]{IntD}, we can write
\begin{equation*}
w_q(n):=\frac{n-s_q(n)}{q-1};
\end{equation*}
thus, for every $\psi,\theta\inZ$, the function
\begin{equation*}
\psi w_q(n)+\theta n=\frac{\theta(q-1)+\psi}{q-1}\cdot n-\frac{\psi}{q-1}\cdot s_q(n);
\end{equation*}
is $q$-additive. On the other hand, note that $u_q(n)$ is not $q$-additive, since (for example) $u_q(q+1)=2\neq 1=u_q(q)+u_q(1)$. This point of view yields the following, the second part of which is another partial answer to \cite[Corollary 2.2(1)]{elliott-polya}.
\begin{teor}\label{teor:qadd}
Let $d,q\geq 2$ be positive integers. Then:
\begin{enumerate}[(a)]
\item\label{teor:qadd:wq} for every $\theta_w,\theta_1,\theta_0\inZ$, the density $\delta(q,d;0,\theta_w,0,\theta_1,\theta_0)$ exists;
\item\label{teor:qadd:dqn} if $d|q^n$ for some $n$, then the vector of densities $\boldsymbol{\delta}(q,d)$ exists.
\end{enumerate}
\end{teor}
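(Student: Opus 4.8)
For part \eqref{teor:qadd:wq}, the plan is to exploit the fact, recalled just before the statement, that $\psi w_q(n)+\theta n$ is a $q$-additive function. The strategy is to reduce the density computation to an application of the matrix machinery on a much smaller state space. Concretely, fix $\psi=\theta_w$ and $\theta=\theta_1$ and consider the function $g(n):=\psi w_q(n)+\theta n \bmod d$; being $q$-additive, its value on $n=\sum_j a_j q^j$ is the sum modulo $d$ of its values on the ``digit blocks'' $a_j q^j$. I would first show that knowing $g(n)\bmod d$ together with which residue class $n$ lies in modulo $q$ is enough to determine $g(nq+\lambda)\bmod d$ for each $\lambda$ — this is precisely what Lemma \ref{lemma:wqaq} delivers, since $w_q(aq+\lambda)=w_q(a)+a$ and $aq+\lambda\equiv\lambda\bmod q$ already encodes the relevant increment. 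Thus the equation $g(n)\equiv x\bmod d$ is governed by a stochastic matrix acting on the finite set $\insZ/d\insZ$ (the possible values of $g$), obtained by collapsing $P(q,d)$ along the subspace where $\theta_u=\theta_2=\theta_0$-irrelevant coordinates are fixed; equivalently, one runs the argument of Proposition \ref{prop:trasf-uq} directly on the two-variable vector $(w_q(n),n)\bmod d$, which is an invariant block under the maps $M_\lambda$ when $\theta_u=\theta_2=0$. Once we are on this finite state space, the point is that the associated Markov chain is \emph{aperiodic and has a single ergodic class reachable from the start}, or more robustly: one shows every eigenvalue of the reduced stochastic matrix other than $1$ has modulus strictly less than $1$ (no other eigenvalue of modulus $1$), so that $P^k\widetilde{\boldsymbol\gamma}_0$ converges; then Theorem \ref{teor:rational}'s argument — or just the elementary fact that $Q^k\to 0$ when $Q$'s spectral radius is below $1$ — gives the existence of the limit, hence the density exists.

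For part \eqref{teor:qadd:dqn}, the plan is to use the hypothesis $d\mid q^n$ to force the full matrix $P(q,d)$ to behave well. The key observation is that when $d\mid q^n$, iterating the transformation $\mathbf s\mapsto \mathbf sM_\lambda$ a total of $n$ times kills the ``unstable'' directions: looking at the matrices $M_\lambda$, every entry below and on part of the diagonal involves a factor of $q$ or $q^2$ (the diagonal of $M_\lambda$ is $(q,1,q^2,q,1)$), and composing $n$ of them — each of the form $q\cdot(\text{unipotent-ish})$ in the relevant coordinates — produces, modulo $d$, a matrix in which the $u_q$, $n(n+1)/2$, and $n$ coordinates get multiplied by $q^n\equiv 0$ or by nilpotent contributions. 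The upshot I would aim for is that $P(q,d)^n$, or some fixed power of it, is a matrix whose only eigenvalues are $0$ and $1$: the chain $\markov(q,d)$ reaches its ergodic classes in a bounded number of steps and, on each ergodic class, the restricted chain has period $1$. Spelling this out, I would: (i) compute $M_{\lambda_1}M_{\lambda_2}\cdots M_{\lambda_n} \bmod d$ and check it is upper-triangular with diagonal entries in $\{0,1\}$ modulo $d$ after reduction (using $q^n\equiv 0 \bmod d$ for the $q^2$-entry and the $q$-entries, and handling the $w_q$–row, which has diagonal entry $1$, separately); (ii) conclude that the transition over $n$ steps sends $\mathbf s$ to a $\mathbf t$ depending only on $\mathbf s$ through a projection, so the $n$-step transition matrix $P^n$ is idempotent up to the stable block, forcing all non-$1$ eigenvalues of $P$ to have modulus $<1$; (iii) invoke convergence of $P^k\widetilde{\boldsymbol\gamma}_0$ as in Theorem \ref{teor:rational}.

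The main obstacle I anticipate is step (i)/(ii) of part \eqref{teor:qadd:dqn}: the matrices $M_\lambda$ are not simultaneously triangularizable in an obvious clean way, and the $w_q$ coordinate (second row, with a $1$ on the diagonal and a $1$ in the $\theta_1$-column) does \emph{not} get a factor of $q$, so one must argue carefully that the combination $(\lambda-q+1)$-type entries feeding into it, when iterated and reduced mod $d$, still leave no eigenvalue of modulus $1$ other than those forced to equal $1$. A cleaner route may be to avoid explicit matrix powers entirely and instead argue combinatorially on the Markov chain: show directly that from any state $\mathbf s$, after $n$ steps the reachable states form a set on which the chain is ``constant enough'' — i.e. that every communicating class that is reached is aperiodic, because self-loops appear (the choice $\lambda$ with $\mathbf sM_\lambda=\mathbf s$ exists on ergodic classes once $q^n\equiv 0$ collapses the affine part). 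If aperiodicity of every ergodic class can be established, then $1$ is the only eigenvalue of modulus $1$, convergence follows, and Theorem \ref{teor:rational} finishes both parts uniformly. For part \eqref{teor:qadd:wq} the analogous obstacle is milder because the reduced chain on $\insZ/d\insZ$ is small and its transition structure is explicit from Lemma \ref{lemma:wqaq}, so checking aperiodicity there should be routine — the state $x=0$ with $n\equiv 0\bmod q$ gives a self-loop whenever $\lambda=0$ contributes $w_q(0q+0)=w_q(0)+0$, i.e. the chain can stay put.
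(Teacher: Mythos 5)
The decisive problem is logical: both halves of your plan end with ``$P^k\boldsymbol{\widetilde{\gamma}}_0$ converges, hence the density exists'', but convergence of $P^k\boldsymbol{\widetilde{\gamma}}_0$ only controls $\gamma(q^k,q,d;\mathbf{s})/q^k$, i.e.\ the counting ratio along the subsequence $N=q^k$, and existence of that limit does not imply existence of the natural density of Definition \ref{defin:density} (compare the set of integers whose leading digit in base $10$ is $1$: the ratio converges along powers of $10$, yet the natural density does not exist). Theorem \ref{teor:rational} cannot be run in the direction you want: it \emph{assumes} the density exists and deduces the eigenvector property; it does not produce existence. This is exactly why the paper does not argue spectrally for part (a) at all: it invokes the theorem of Peter and Spilker \cite{limitdistrib-qadd} that every integer-valued $q$-additive function has a limit distribution modulo $d$, applied to $\theta_w w_q(n)+\theta_1 n$ (the passage from powers of $q$ to arbitrary $N$ is carried by that external result). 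Part (b) is then deduced from (a) by a \emph{fixed} number $n$ of applications of Proposition \ref{prop:trasf-uq}: since the first and third columns of $M_{\lambda_1}\cdots M_{\lambda_n}$ are divisible by $q^n$ and $d\mid q^n$, every vector $\mathbf{s}M_{\lambda_1}\cdots M_{\lambda_n}$ has $\theta_u=\theta_2=0$, so $\gamma(q^nN,q,d;\mathbf{s})/(q^nN)$ is a finite average of ratios whose limits exist by (a) as $N\to\infty$ through \emph{all} integers; since $n$ is fixed, $\gamma(A,q,d;\mathbf{s})/A$ differs from this by $O(q^n/A)$ and the natural density follows. Your matrix-product computation for (b) is close to this observation (although the $\theta_1$-column does \emph{not} vanish modulo $d$; only the $\theta_u$- and $\theta_2$-columns are divisible by $q^n$), but you feed it back into a spectral argument instead of into part (a), and so inherit the same gap.

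Moreover, the spectral claims themselves fail. In the reduced chain relevant to (a) the states are $(\psi,\theta,x)$ with transitions $(\psi,\theta,x)\mapsto(\psi,\psi+q\theta,x+\lambda\theta)$, so the middle coordinate evolves \emph{deterministically} by $\theta\mapsto\psi+q\theta$, and this map can have cycles of length greater than $1$ (for $q=2$, $d=3$, $\psi=1$ one has $0\mapsto1\mapsto0$); every return time to such a state is then a multiple of the cycle length, the ergodic class is periodic, and the transition matrix has eigenvalues of modulus $1$ different from $1$. So aperiodicity, or ``no other eigenvalue of modulus $1$'', is simply not available; Lemma \ref{lemma:autoval} and the block $R$ in Theorem \ref{teor:rational} exist precisely to tolerate such eigenvalues, and Theorem \ref{teor:thetaw=0} copes with the periodicity by passing to the $k$-step chain and the sequence $a_k(\theta)$. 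Finally, your aperiodicity heuristic via self-loops conflates values of $n$ with states of the chain: the states are coefficient vectors $\mathbf{s}$, and $\lambda=0$ gives a self-loop at $(\psi,\theta,x)$ only when $\psi+q\theta\equiv\theta\bmod d$, which is not automatic.
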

\begin{proof}
\ref{teor:qadd:wq} By \cite[Theorem 1.1]{limitdistrib-qadd}, every $q$-additive function with integer values has a limit distribution modulo $d$, for every $d$; in particular, so does $\theta_w w_q(n)+\theta_1 n$, and the density $\delta(q,d;0,\theta_w,0,\theta_1,\theta_0)$ exists.

\ref{teor:qadd:dqn} Let $\lambda_1,\ldots,\lambda_n\in\{0,\ldots,q-1\}$, and let $\mathbf{s}\in(\insZ/d\insZ)^5$. Applying repeatedly Proposition \ref{prop:trasf-uq}, we have
\begin{equation*}
\frac{\gamma(q^nN,q,d;\mathbf{s})}{q^nN}=\sum_{\lambda_1,\ldots,\lambda_n=0}^{q-1} \inv{q^n}\frac{\gamma(N,q,d;\mathbf{s}M_{\lambda_1}\cdots M_{\lambda_n})}{N}.
\end{equation*}
The first and the third column of $M_{\lambda_1}\cdots M_{\lambda_n}$ (as integer matrices) are divisible by $q^n$; hence, they are 0 when reduced modulo $d$. It follows that the first and the third component of $\mathbf{s}M_{\lambda_1}\cdots M_{\lambda_n}$ are always 0.

Therefore, by the previous point, for each summand of the right hand side the limit (as $N\to\infty$) exists; it follows that so does the limit of the left hand side, i.e., the density $\delta(q,d;\mathbf{s})$ exists.
\end{proof}

The first part of the previous theorem is especially useful since, if we are interested in the distribution of the function $w_q$, we don't need to consider $\theta_u$ or $\theta_2$; that is, we can study the Markov chain limited to the subset of $(\insZ/d\insZ)^5$ where $\theta_u=\theta_2=0$. The next result calculates these densities.
\begin{teor}\label{teor:thetaw=0}
Let $q,d\geq 2$ be positive integers, and let $\psi\inZ$, $\psi\neq 0$. Then,
\begin{equation*}
\delta(q,d;0,\psi,0,\theta,x)=\begin{cases}
\inv{d}\gcd(\psi,\theta,d) & \text{if~}\gcd(\psi,\theta,d)|\gcd(x,d)\\
0 & \text{otherwise}
\end{cases}
\end{equation*}
for every $x\inZ$.
\end{teor}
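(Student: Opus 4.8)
The plan is to turn the congruence into a statement about the base‑$q$ digits of $n$ and then to evaluate the limiting density by discrete Fourier analysis on $\insZ/d\insZ$, using part \ref{teor:qadd:wq} of Theorem \ref{teor:qadd} to know in advance that the limit exists. First I would rewrite the equation digit‑by‑digit: from the identity $w_q(n)=(n-s_q(n))/(q-1)$ recalled before the statement and $n=\sum_{j\ge 0}a_{q,j}(n)q^j$, one gets $w_q(n)=\sum_{j\ge 0}a_{q,j}(n)\frac{q^j-1}{q-1}$, hence
\[
\psi w_q(n)+\theta n=\sum_{j\ge 0}a_{q,j}(n)\,c_j,\qquad c_j:=\psi\frac{q^j-1}{q-1}+\theta q^j,
\]
so that $c_0=\theta$ and $c_{j+1}=qc_j+\psi$. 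Thus $n<q^k$ solves the equation if and only if $\sum_{j=0}^{k-1}a_{q,j}(n)c_j\equiv-x\bmod d$. Since $c_1-qc_0=\psi$, for every $k\ge 2$ one has $\gcd(c_0,\dots,c_{k-1},d)=\gcd(\theta,\psi,d)=:g$: one inclusion because this gcd divides $c_0=\theta$ and $c_1-qc_0=\psi$, the other by induction (as $g\mid c_0$, $g\mid\psi$, and $g\mid c_{j+1}$ whenever $g\mid c_j$). In particular the $c_j\bmod d$ take only finitely many values.

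By part \ref{teor:qadd:wq} of Theorem \ref{teor:qadd} the density $\delta(q,d;0,\psi,0,\theta,x)$ exists, so it equals $\lim_{k\to\infty}\widetilde{\gamma}(q^k,q,d;\mathbf{s})$ with $\mathbf{s}=(0,\psi,0,\theta,x)$. Counting the tuples $(a_0,\dots,a_{k-1})\in\{0,\dots,q-1\}^k$ with $\sum_j a_jc_j\equiv-x\bmod d$ via the additive characters $m\mapsto e^{2\pi i tm/d}$ of $\insZ/d\insZ$, and using that the digits vary independently,
\[
\widetilde{\gamma}(q^k,q,d;\mathbf{s})=\frac1d\sum_{t=0}^{d-1}e^{2\pi i tx/d}\prod_{j=0}^{k-1}\phi_j(t),\qquad \phi_j(t):=\frac1q\sum_{a=0}^{q-1}e^{2\pi i tc_ja/d}.
\]
Here $|\phi_j(t)|\le 1$, with $\phi_j(t)=1$ exactly when $tc_j\equiv 0\bmod d$, with $\phi_j(t)=0$ when $tc_j\not\equiv 0$ but $qtc_j\equiv 0\bmod d$, and with $0<|\phi_j(t)|<1$ in all other cases.

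The heart of the argument is the behaviour of the products $\prod_{j<k}\phi_j(t)$ as $k\to\infty$. If $(d/g)\mid t$ then $tc_j\equiv 0\bmod d$ for every $j$ (because $g\mid c_j$), so this product is $1$ for all $k$. If $(d/g)\nmid t$, I claim it tends to $0$. If some $\phi_j(t)=0$ this is clear. Otherwise, suppose $tc_j\equiv 0\bmod d$ for all $j$ past some index $J$; since $(d/g)\nmid t$ there is a largest index $j^*$ with $tc_{j^*}\not\equiv 0$, and $j^*<J$. From $tc_{j^*+1}\equiv tc_{j^*+2}\equiv 0$ and $c_{m+1}=qc_m+\psi$ one deduces first $t\psi\equiv 0$ and then $q\,tc_{j^*}\equiv 0\bmod d$, so that $e^{2\pi i tc_{j^*}/d}$ is a nontrivial $q$‑th root of unity and $\phi_{j^*}(t)=0$, a contradiction. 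Hence $tc_j\not\equiv 0$, i.e.\ $|\phi_j(t)|<1$, for infinitely many $j$; since the $c_j\bmod d$ take only finitely many values, those moduli are bounded by some $\rho<1$, and $|\prod_{j<k}\phi_j(t)|\le\rho^{m_k}$ with $m_k\to\infty$. In either case the product tends to $0$.

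Putting this together, the density equals $\frac1d\sum_{0\le t<d,\ (d/g)\mid t}e^{2\pi i tx/d}=\frac1d\sum_{s=0}^{g-1}e^{2\pi i sx/g}$, which is $g/d$ when $g\mid x$ and $0$ otherwise; and $g\mid x$ is equivalent to $g\mid\gcd(x,d)$ since $g\mid d$. With $g=\gcd(\psi,\theta,d)$ this is exactly the asserted formula. I expect the only genuinely delicate point to be the product estimate above — specifically excluding the scenario in which $tc_j$ becomes $\equiv 0\bmod d$ only from some index onward — which is precisely where one uses the recursion $c_{j+1}=qc_j+\psi$ rather than mere $q$‑additivity; everything else is bookkeeping.
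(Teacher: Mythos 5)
Your argument is correct, and it follows a genuinely different route from the paper. The paper first disposes of the case $\gcd(\psi,\theta,d)\nmid\gcd(x,d)$ and reduces to $\gcd(\psi,\theta,d)=1$, then works with the restricted Markov chain $\markov'(q,d)$ on the states $(0,\psi,0,\theta,x)$: it computes the products $N_{\lambda_1}\cdots N_{\lambda_k}$ explicitly, shows via the polynomial $P_k$ that the chain restricted to $X(\psi,\theta)$ is irreducible whenever $a_k(\theta)\equiv\theta\bmod d$ for some $k$, invokes Proposition \ref{prop:markov-autovett} to equate the densities over $x$, and handles the remaining case by a periodicity argument on the sequence $a_k(\theta)$. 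You instead make the $q$-additivity fully explicit through the digit expansion, writing $\psi w_q(n)+\theta n=\sum_j a_{q,j}(n)c_j$ with $c_{j+1}=qc_j+\psi$, $c_0=\theta$ (note that your $c_j$ is precisely the paper's $a_j(\theta)$, so the same recursion drives both proofs), and then evaluate $\lim_k\widetilde{\gamma}(q^k,q,d;\mathbf{s})$ by additive characters modulo $d$, showing that the product $\prod_{j<k}\phi_j(t)$ converges to the indicator of $(d/g)\mid t$; your treatment of the delicate subcase (all $tc_j\equiv 0$ from some index on) is correct, since $tc_{j^*+1}\equiv tc_{j^*+2}\equiv 0$ forces $t\psi\equiv 0$ and then $qtc_{j^*}\equiv 0$, killing the factor $\phi_{j^*}(t)$ -- and the existence of some index with $tc_j\not\equiv 0$ is implicit but immediate, since $tc_0\equiv tc_1\equiv 0$ would give $t\theta\equiv t\psi\equiv 0$ and hence $tg\equiv 0\bmod d$. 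Both proofs lean on Theorem \ref{teor:qadd}\ref{teor:qadd:wq} so that it suffices to compute the limit along $N=q^k$. What the two approaches buy: yours is more self-contained and uniform (no reduction to the coprime case, no appeal to the eigenvector structure of Proposition \ref{prop:markov-autovett}, and the zero-density case falls out of the same character sum), while the paper's Markov-chain formulation is the one that extends to the genuinely non-$q$-additive function $u_q$ in Theorems \ref{teor:d|q} and \ref{teor:dqcoprimi}, where the digit factorization underlying your Fourier computation is no longer available.
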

\begin{proof}
By Theorem \ref{teor:qadd}\ref{teor:qadd:wq} the density exists.

We first note that, if $\gcd(\psi,\theta,d)$ does not divide $\gcd(x,d)$, then the equation $\psi w_q(n)+\theta n+x\equiv 0\bmod d$ cannot have solutions, so the density is 0.

Suppose that $g:=\gcd(\psi,\theta,d)$ divides $\gcd(x,d)$. In this case, $\psi w_q(n)+\theta n+x\equiv 0\bmod d$ is equivalent to $\frac{\psi}{g} w_q(n)+\frac{\theta}{g} n+\frac{x}{g}\equiv 0\bmod \frac{d}{g}$; therefore,
\begin{equation*}
\gamma(A,q,d;0,\psi,0,\theta,x)=\gamma\left(A,q,\frac{d}{g};0,	\frac{\psi}{g},0,\frac{\theta}{g},\frac{x}{g}\right).
\end{equation*}
Moreover, $\gcd(\psi/g,\theta/g,d/g)=1$; hence, it is enough to prove the claim in the case $\gcd(\psi,\theta,d)=1$.

The set $X:=\{(0,\psi,0,\theta,x):\psi,\theta,x\in\insZ/d\insZ\}$ is invariant by multiplication on the right by the $M_\lambda$; hence, the Markov chain $\markov(q,d)$ restricts to a chain $\markov'(q,d)$ on $X$, which can also be defined as the Markov chain having the transition matrix $Q:=(\mu'(\mathbf{s},\mathbf{t})/q)$, where $\mu'(\mathbf{s},\mathbf{t})$ is the number of $\lambda$ such that $\mathbf{s}N_\lambda=\mathbf{t}$ and
\begin{equation*}
N_\lambda:=\begin{pmatrix}
1 & 1 & 0\\
0 & q & \lambda\\
0 & 0 & 1
\end{pmatrix}
\end{equation*}
is just the submatrix of $M_\lambda$ relative to $\theta_w$, $\theta_1$ and $\theta_0$.

Let $k\geq 1$. Consider the stochastic matrix $Q^k$; then, the entry relative to $\mathbf{s},\mathbf{t}$ is $\mu_k(\mathbf{s},\mathbf{t})/q^k$, where $\mu_k(\mathbf{s},\mathbf{t})$ is the number of $k$-tuples $(\lambda_1,\ldots,\lambda_k)$ such that $\mathbf{s}M_{\lambda_1}\cdots M_{\lambda_k}=\mathbf{t}$. Consider the Markov chain $\markov'_k(q,d)$ represented by $Q^k$: then, the probability of going from $\mathbf{s}$ to $\mathbf{t}$ is equal to $\mu_k(\mathbf{s},\mathbf{t})/q^k$, which is equal to the probability of going from $\mathbf{s}$ to $\mathbf{t}$ in $k$ steps in $\markov'(q,d)$. 

We claim that, for $k\geq 1$,
\begin{equation*}
\begin{pmatrix}\psi & \theta & x\end{pmatrix}N_{\lambda_1}\cdots N_{\lambda_k}= \begin{pmatrix}\psi\\ q^k\theta+q^{k-1}\psi+q^{k-2}\psi+\cdots+q\psi+\psi\\ x+P_k(\lambda_1,\ldots,\lambda_k,\psi,\theta,q)\end{pmatrix}^\trasposta,
\end{equation*}
where 
\begin{equation*}
P_k(\lambda_1,\ldots,\lambda_k,\psi,\theta,q):=\lambda_1\theta+\lambda_2(q\theta+\psi)+\cdots+ \lambda_k(q^{k-1}\theta+q^{k-2}\psi+\cdots+\psi).
\end{equation*}
Indeed, this is clear for $k=1$ and follows easily by induction for arbitrary $k$.

Consider the succession $a_k(\theta):=q^k\theta+q^{k-1}\psi+q^{k-2}\psi+\cdots+q\psi+\psi$, and suppose there is a $k>1$ such that $a_k(\theta)\equiv\theta\bmod d$. Then, for any $\psi$ and $\theta$, the subset $X(\psi,\theta):=\{(\psi,\theta,x):x\in\insZ/d\insZ\}$ of $X$ is invariant by $\markov'_k(q,d)$, and thus we can restrict the Markov chain to $X(\psi,\theta)$; we claim that this chain is irreducible, i.e., that any state can be reached from any state.

The probability of going from $(\psi,\theta,x)$ to $(\psi,\theta,y)$ is nonzero if and only if there are $\lambda_1,\ldots,\lambda_k$ such that $y-x=P_k(\lambda_1,\ldots,\lambda_k,q,\psi,\theta,q)$; hence, $x$ and $y$ are communicating if and only if they belong to the same coset of the subgroup $G$ of $\insZ/d\insZ$ generated by $P_k(\lambda_1,\ldots,\lambda_k,\psi,\theta,q)$, as $\lambda_1,\ldots,\lambda_k$ vary in $0,\ldots,q-1$.

The group $G$ contains both $\theta$ and $\psi$, since $P_k(1,0,\ldots,0)=\theta$ and $P_k(0,1,0,\ldots,0)=q\theta+\psi$. Since $\gcd(\psi,\theta,d)=1$, this implies that $G=\insZ/d\insZ$, and thus any state can be reached from any $x$. By Proposition \ref{prop:markov-autovett}, $\delta(0,\psi,0,\theta,x)=\delta(0,\psi,0,\theta,y)$ for all $x,y$. However,
\begin{equation*}
\sum_{x=0}^{d-1}\delta(0,\psi,0,\theta,x)=1;
\end{equation*}
hence, $\delta(0,\psi,0,\theta,x)=1/d$ for all $x$, as claimed.

Suppose now that $a_k(\theta)\not\equiv\theta\bmod d$ for every $k$. Even in this case, there must be $m<m'$ such that $a_m(\theta)\equiv a_{m'}(\theta)\bmod d$; since $a_{i+j}(\theta)=a_i(a_j(\theta))$ for every $i,j,\theta$, this means that $a_k(a_m(\theta))\equiv a_m(\theta)\bmod d$, where $k:=m'-m$. By the previous part of the proof, $\delta(0,\psi,0,a_m(\theta),x)=1/d$ for every $x\inZ$. Since the set of the densities $\delta(0,\psi,0,\theta,x)$ is an eigenvector of $Q^m$, we have
\begin{equation*}
\delta(\mathbf{s})=\sum_{\mathbf{t}\in X}\frac{\mu_m(\mathbf{s},\mathbf{t})}{q^m}\delta(\mathbf{t}).
\end{equation*}
Now $\mu_m((0,\psi,0,\theta,x),(0,\psi',0,\theta',x))=0$ unless $\psi'=\psi$ and $\theta'=a_m(\theta)$; hence, if $\mathbf{s}:=(0,\psi,0,\theta,x)$ we have
\begin{equation*}
\delta(\mathbf{s})=\sum_{y=0}^{d-1}\frac{\mu_m(\mathbf{s},(0,\psi,0,a_m(\theta),y))}{q^m}\delta(0,\psi,0,a_m(\theta),y).
\end{equation*}
By the previous part of the proof, each $\delta(0,\psi,0,a_m(\theta),y)$ is equal to $1/d$; since the sum of all $\mu_m(\mathbf{s},\mathbf{t})$ is $q^m$, this means that $\delta(0,\psi,0,\theta,x)=1/d$, as claimed.
\end{proof}

\begin{cor}\label{cor:wqthetan}
Let $q,d\geq 2$ be positive integers. For every $\theta$, the map $n\mapsto w_q(n)+\theta n$ is uniformly distributed.
\end{cor}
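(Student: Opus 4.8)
The plan is to read this off directly from Theorem \ref{teor:thetaw=0}. The distribution modulo $d$ of the function $f:n\mapsto w_q(n)+\theta n$ is, by definition, the assignment $x\mapsto\delta(q,d;0,1,0,\theta,x)$: in the notation of Equation \eqref{eq:tot-uq} the function $f$ corresponds to the coefficient vector with $\theta_u=0$, $\theta_w=1$, $\theta_2=0$ and $\theta_1=\theta$, and $f$ is uniformly distributed modulo $d$ precisely when all the densities $\delta(q,d;0,1,0,\theta,x)$, for $x\in\insZ/d\insZ$, are equal (hence all equal to $1/d$). So the whole task reduces to checking that these densities are all $1/d$.

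Now invoke Theorem \ref{teor:thetaw=0} with $\psi=1$; this is legitimate, since $\psi=1\neq 0$, which is the only hypothesis of that theorem to verify. For every integer $\theta$ and every $d\geq 2$ one has $\gcd(1,\theta,d)=1$, and $1$ divides $\gcd(x,d)$ for every $x$; hence the first branch of the formula in Theorem \ref{teor:thetaw=0} always applies and gives $\delta(q,d;0,1,0,\theta,x)=\inv{d}\gcd(1,\theta,d)=\inv{d}$ for all $x\in\insZ$. Since the densities are all equal to $1/d$, the map $n\mapsto w_q(n)+\theta n$ is uniformly distributed modulo $d$, as claimed. There is no genuine obstacle in the argument: it is purely a matter of specializing $\psi$ to $1$ so that the gcd divisibility condition becomes vacuous and the common value of the densities is forced to be $1/d$.
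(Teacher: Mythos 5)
Your proof is correct and is exactly the paper's argument: the paper likewise deduces the corollary by taking $\psi=1$ in Theorem \ref{teor:thetaw=0}, where $\gcd(1,\theta,d)=1$ makes the divisibility condition vacuous and forces every density to equal $1/d$. You have simply written out the specialization in more detail than the paper does.
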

\begin{proof}
It is enough to consider the case $\psi=1$ in the previous theorem.
\end{proof}

We remark that in some cases we can also prove the existence of the limit without using the theory of $q$-additive functions.
\begin{prop}\label{prop:wqmodq}
Let $q\geq 2$ be a positive integer and $x,\theta\inZ$. If $\theta$ is coprime with $q$, or if $q|\theta$, then the map $n\mapsto w_q(n)+\theta n$ is uniformly distributed modulo $q$.
\end{prop}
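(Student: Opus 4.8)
The plan is to give a self-contained argument relying only on Lemma~\ref{lemma:wqaq} and elementary counting, bypassing the limit-distribution theory for $q$-additive functions used in Theorem~\ref{teor:thetaw=0}. I would fix a residue $r$ modulo $q$ and, for $\theta\in\insZ$ and $N\in\insN$, set $g(\theta;N):=|\{n<N:w_q(n)+\theta n\equiv r\bmod q\}|$; it suffices to show that $g(\theta;N)/N\to 1/q$ as $N\to\infty$. The key observation is that, writing $n=aq+\lambda$ with $a=\lfloor n/q\rfloor$ and $\lambda=n-aq\in\{0,\ldots,q-1\}$, Lemma~\ref{lemma:wqaq} gives the exact identity $w_q(n)+\theta n=w_q(a)+a+\theta qa+\theta\lambda$. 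Hence $n$ is a solution if and only if $\theta\lambda\equiv r-w_q(a)-a\bmod q$, and the whole proof reduces to counting, in each block $B_a:=\{aq,\ldots,aq+q-1\}$, the number of $\lambda\in\{0,\ldots,q-1\}$ satisfying this congruence.

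In the coprime case $\gcd(\theta,q)=1$ the map $\lambda\mapsto\theta\lambda$ is a bijection of $\insZ/q\insZ$, so for every $a$ there is exactly one $\lambda\in\{0,\ldots,q-1\}$ making $aq+\lambda$ a solution; that is, each block $B_a$ contains exactly one solution. Decomposing $[0,qM)=\bigsqcup_{a<M}B_a$ then gives $g(\theta;qM)=M$ for every $M$. Since $g(\theta;\cdot)$ is nondecreasing with $g(\theta;qM)=M$ and $g(\theta;q(M+1))=M+1$, I get $|g(\theta;N)-N/q|\le 1$ for all $N$, whence $g(\theta;N)/N\to 1/q$. In particular this covers $\theta=1$, i.e.\ the map $n\mapsto w_q(n)+n$ is uniformly distributed modulo $q$.

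For the case $q\mid\theta$ one has $\theta\lambda\equiv 0\bmod q$ for all $\lambda$, so the congruence above becomes $w_q(a)+a\equiv r\bmod q$, independently of $\lambda$: each block $B_a$ is entirely contained in the solution set or disjoint from it, according to whether $w_q(a)+a\equiv r\bmod q$. Hence $g(\theta;qM)=q\cdot|\{a<M:w_q(a)+a\equiv r\bmod q\}|=q\cdot g(1;M)$. By the coprime case applied with $\theta=1$ we have $g(1;M)=M/q+O(1)$, so $g(\theta;qM)=M+O(q)$; comparing $N$ with $q\lfloor N/q\rfloor$ as before absorbs a further $O(q)$ error, and again $g(\theta;N)/N\to 1/q$.

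I do not expect a genuine obstacle here, only bookkeeping: the points to get right are that in the coprime case the count on multiples of $q$ is \emph{exact} (which trivializes the limit) and that the $O(q)$ errors incurred when $N$ is not a multiple of $q$ are really bounded. It is worth noting why the method does not extend: when $1<\gcd(\theta,q)<q$, the number of admissible $\lambda$ in a block $B_a$ depends on the residue class of $w_q(a)+a$ modulo $\gcd(\theta,q)$, and pinning down the density of that class seems to require the general theory.
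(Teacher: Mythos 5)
Your proof is correct and follows essentially the same route as the paper's: decompose $[0,N)$ into blocks $\{aq,\ldots,aq+q-1\}$ on which $w_q$ is constant (with value $w_q(a)+a$ by Lemma~\ref{lemma:wqaq}), count exactly one solution per block when $\gcd(\theta,q)=1$, and reduce the case $q\mid\theta$ to the case $\theta=1$. The only difference is that you spell out the $O(1)$ and $O(q)$ error bounds more explicitly than the paper does, which is fine but not a new idea.
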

\begin{proof}
Suppose first that $\theta$ is coprime with $q$. In each block $\{qa,\ldots,qa+q-1\}$, $w_q(n)$ is constant; hence, the equation $w_q(n)+\theta n\equiv x\bmod q$ has a unique solution, namely $n=qa+r$ with $r\equiv\theta^{-1}(x-w_q(qa))\bmod q$. Hence, the number of solutions of $w_q(n)+\theta n\equiv x\bmod q$ in $[0,\ldots,N)$ is $N/d+O(1)$; i.e., $\gamma(N,q,q;0,1,0,\theta,x)=N/d+O(1)$. Dividing by $N$, the limit on the right hand side exists and is equal to $1/d$; hence, the same applies to the left hand side.

For $q|\theta$ (and it is enough to consider $\theta=0$), we note that
\begin{equation*}
\frac{\gamma(N,q,q;0,1,0,0,x)}{N}=q\frac{\gamma(N/q,q,q;0,1,0,1,x)}{N}+O(1)
\end{equation*}
and the right hand side goes to $1/d$ by the previous reasoning. The claim is proved. 
\end{proof}

\section{The case $\mathbf{s}=(1,0,0,0,x)$}\label{sect:uq}
The methods used in the proof of Theorem \ref{teor:thetaw=0} can also be applied to study the full equation \eqref{eq:tot-uq}; in particular, we shall be interested in the equation with $(\theta_u,\theta_w,\theta_2,\theta_1,\theta_0)=(1,0,0,0,x)=:\mathbf{s}_x$. We are not able to obtain a full picture of the situation, so we will concentrate on two special cases. Before analyzing them, we note that we can obtain a lower limit for the density of the solutions.
\begin{prop}\label{prop:gamma-phi}
Let $d,q$ be positive integers and let $x\inZ$. Then,
\begin{equation*}
\liminf_{N\to\infty}\frac{\gamma(N,q,d;\mathbf{s}_x)}{N}\geq\frac{\varphi(d)}{dq}\left\lfloor\frac{q}{d}\right\rfloor,
\end{equation*}
where $\varphi$ is the Euler function.
\end{prop}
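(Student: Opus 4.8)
The plan is to count solutions of $u_q(n)\equiv -x\bmod d$ arising from a single step of the recurrence, restricting attention to those $a$ for which the first-step computation already forces the congruence, regardless of the lower-order behavior of $a$. Recall from Lemma \ref{lemma:uqaq} that for $n=aq+\lambda$ with $0\le\lambda\le q-1$ we have
\begin{equation*}
u_q(aq+\lambda)=qu_q(a)+(\lambda+1-q)w_q(a)+q\frac{a(a+1)}{2}+(\lambda+1-q)a.
\end{equation*}
I would reduce this modulo $d$ and look for those residues of $\lambda$ for which the coefficient $\lambda+1-q$ vanishes modulo $d$, i.e. $\lambda\equiv q-1\bmod d$. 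For such $\lambda$ the dependence on $w_q(a)$ and on the linear term $(\lambda+1-q)a$ disappears modulo $d$, leaving $u_q(aq+\lambda)\equiv q\,u_q(a)+q\frac{a(a+1)}{2}\bmod d$. The number of admissible $\lambda$ in $\{0,\dots,q-1\}$ is exactly $\lfloor q/d\rfloor$ (or possibly one more, but $\lfloor q/d\rfloor$ is a safe lower bound).

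Next I would fix one such $\lambda$ and count, among $a\in[0,M)$, those with $q\,u_q(a)+q\frac{a(a+1)}{2}\equiv -x\bmod d$. The idea is to pass from $a$ to a coarser variable: write $a=db+c$ with $0\le c\le d-1$, so that $u_q(a)$ and $\frac{a(a+1)}{2}$ depend modulo $d$ only on finer data, but the combination $q\cdot(\cdot)$ kills a factor. More precisely, I expect the cleanest route is to observe that $q\,u_q(a)+q\frac{a(a+1)}{2}\bmod d$ depends only on $a\bmod (d/\gcd(q,d))$ in a controlled way — and here is where the Euler-$\varphi$ factor should enter: the number of residue classes $r\bmod d$ for which the resulting value of $q\cdot(u_q(r)+\frac{r(r+1)}{2})$ hits the target $-x$ should, on average over $x$, contribute $\varphi(d)/d$ of the classes, giving a density of at least $\varphi(d)/(dq)$ per admissible $\lambda$ and hence $\frac{\varphi(d)}{dq}\lfloor q/d\rfloor$ after summing. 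Alternatively, and perhaps more robustly, I would argue directly: among the $a\in[0,M)$, the proportion for which $q a(a+1)/2\equiv$ something specific modulo $d$ that also accommodates the (bounded-denominator) $u_q$-term can be bounded below by $\varphi(d)/d$ using the fact that $n\mapsto n$ is uniformly distributed modulo $d$ together with the multiplicativity structure of $q\cdot(\cdot)$ modulo $d$; then every such $a$, for every admissible $\lambda$, yields a distinct solution $n=aq+\lambda<qM$, and letting $N=qM$ and dividing by $N$ produces the claimed liminf.

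The main obstacle I anticipate is pinning down the $\varphi(d)$ factor honestly: the term $q\,u_q(a)+q\frac{a(a+1)}{2}$ modulo $d$ is not literally uniformly distributed in $a$, and $u_q$ is not $q$-additive, so I cannot simply invoke Theorem \ref{teor:qadd}. The safe workaround is to not ask for uniformity but only for a lower bound on the density of one fixed fiber: it suffices to exhibit, for each target residue, at least $\varphi(d)$ residue classes $a\bmod d^2$ (say) on which the value is forced, using that $q\cdot(\cdot)$ has image a subgroup of $\insZ/d\insZ$ of index $\gcd(q,d)$ and that within a fixed coset the finer variation of $a$ ranges over enough values. One then checks the count $\varphi(d)/d$ comes out by a direct coset computation rather than an equidistribution statement. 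The rest — translating the per-$a$, per-$\lambda$ count into a statement about $\gamma(qM,q,d;\mathbf{s}_x)$ and taking $\liminf$ — is bookkeeping, with the $O(1)$ boundary error from $a=M-1$ being harmless after division by $N$.
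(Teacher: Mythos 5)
There is a genuine gap, and it sits exactly where you flagged it. After restricting to the residues $\lambda\equiv q-1\bmod d$, you are left needing a lower bound of $\varphi(d)/d$ for the density of $a$ with $qu_q(a)+q\frac{a(a+1)}{2}\equiv -x\bmod d$. Nothing in the paper up to this point gives you any control over that set: it is a congruence condition of the same type as the one under study (a $u_q$-term plus a quadratic term), and at this stage one cannot even assert that its density \emph{exists} (Theorem \ref{teor:qadd} only covers the $\theta_u=\theta_2=0$ slice), let alone that it is at least $\varphi(d)/d$ --- indeed there is no structural reason for that particular value to be a lower bound, and if $\gcd(q,d)=1$ the condition is just $u_q(a)+\frac{a(a+1)}{2}\equiv -q^{-1}x\bmod d$, about which nothing is known. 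Your proposed workaround (a ``coset computation'' using that multiplication by $q$ has image a subgroup of index $\gcd(q,d)$) tells you which residues are \emph{attainable}, but says nothing about how often $u_q(a)+\frac{a(a+1)}{2}$ lands in the required preimage coset as $a$ varies; that is a distributional statement about $u_q$, i.e.\ essentially the problem you started with, so the argument is circular.

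The fix is to invert the roles of $\lambda$ and $a$: do not discard most of the block by fixing $\lambda$, but instead select the blocks by a condition on $a$ that is already under control. Within a block $\{aq,\ldots,aq+q-1\}$ the function $w_q$ is constant, so by $u_q(aq+r)=u_q(aq)+r\,w_q(aq)$ the values of $u_q$ form an arithmetic progression with common difference $w_q(aq)=w_q(a)+a$ (Lemma \ref{lemma:wqaq}). Whenever $w_q(aq)$ is a unit modulo $d$, this progression meets every residue class, hence the target class at least $\left\lfloor q/d\right\rfloor$ times in the block; and the density of $a$ for which $w_q(a)+a$ is coprime to $d$ is exactly $\varphi(d)/d$ by Corollary \ref{cor:wqthetan} (uniform distribution of $n\mapsto w_q(n)+n$), which is where the factor $\varphi(d)$ legitimately enters. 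Summing over blocks and dividing by $N=qM$ then gives the stated liminf. Your first-step reduction via Lemma \ref{lemma:uqaq} is not wrong, but it funnels the difficulty into an unproven (and unprovable, with the tools at hand) equidistribution claim, whereas the block argument only uses the already established distribution of $w_q(n)+n$.
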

\begin{proof}
Consider the blocks $\{aq,aq+1,\ldots,aq+q-1\}$ of $q$ consecutive natural numbers, starting from a multiple of $q$. In any block, the map $n\mapsto w_q(n)$ is constant, and thus the map $n\mapsto u_q(n)$ is of constant difference; in particular,
\begin{equation*}
u_q(aq+r)=u_q(aq)+rw_q(aq).
\end{equation*}
If $w_q(aq)$ is coprime with $d$, then $u_q(aq)+rw_q(aq)$ passes through every residue class modulo $d$, as $d$ goes from 0 to $d-1$; hence, the equation $u_q(n)\equiv x\bmod d$ has at least $\left\lfloor q/d\right\rfloor$ solutions into each block $\{aq,aq+1,\ldots,aq+q-1\}$.

Let now $N$ be an integer, and divide $\{1,\ldots,N\}$ into blocks of length $q$. By Corollary \ref{cor:wqthetan}, in approximately $\varphi(d)/d$ of these blocks $w_q(n)$ is coprime with $d$; hence,
\begin{equation*}
\gamma(N,q,d;1,0,0,0,x)\geq\frac{N}{q}\frac{\varphi(d)}{d}\left\lfloor\frac{q}{d}\right\rfloor+O(1).
\end{equation*}
Dividing by $N$ and taking the limit inferior we get our claim.
\end{proof}

This result is far from being completely satisfying (for example, it says nothing when $q<d$). However, it sometimes hits what is actually the real density, as the following theorem shows.

\begin{teor}\label{teor:d|q}
Let $d,q\geq 2$ be positive integers, and suppose that $d|q$. Then, for every $x\in\insZ$,
\begin{equation*}
\delta(q,d;1,0,0,0,x)=\inv{d^2}\sum_{f|\gcd(x,d)}f\cdot\varphi\left(\frac{d}{f}\right).
\end{equation*}
\end{teor}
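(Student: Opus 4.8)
The plan is to exploit the hypothesis $d\mid q$ to show that the matrix $M_\lambda$ simplifies dramatically modulo $d$, and then to analyze the resulting Markov chain $\markov(q,d)$ restricted to the invariant set containing $\mathbf{s}_x=(1,0,0,0,x)$. Since $d\mid q$, we have $q\equiv 0\bmod d$, so modulo $d$ each $M_\lambda$ becomes
\begin{equation*}
M_\lambda\equiv\begin{pmatrix}
0 & \lambda+1 & 0 & \lambda+1 & 0\\
0 & 1 & 0 & 1 & 0\\
0 & 0 & 0 & 0 & \frac{\lambda(\lambda+1)}{2}\\
0 & 0 & 0 & 0 & \lambda\\
0 & 0 & 0 & 0 & 1
\end{pmatrix}\pmod d
\end{equation*}
(using $\lambda q-\frac{q(q-1)}{2}\equiv 0$ and the fact that one must be slightly careful with the half-integer entries when $d$ is even, which is handled because we use $n(n+1)/2$ rather than $n^2$). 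Applying this to $\mathbf{s}_x$ gives $\mathbf{s}_x M_\lambda=(0,\lambda+1,0,\lambda+1,x)$, an element of the set $X_0$ of vectors with first and third coordinate zero. So after one step the chain has left the "$\theta_u=1$'' stratum forever; the density $\delta(\mathbf{s}_x)$ is therefore a convex combination, with weights $\mu(\mathbf{s}_x,\mathbf{t})/q$, of the densities $\delta(0,\lambda+1,0,\lambda+1,x)$ for $\lambda=0,\ldots,q-1$, and each of those is given explicitly by Theorem~\ref{teor:thetaw=0}.

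Next I would carry out the combinatorial count. By Theorem~\ref{teor:thetaw=0} with $\psi=\theta=\lambda+1$, we have $\delta(0,\lambda+1,0,\lambda+1,x)=\frac1d\gcd(\lambda+1,d)$ if $\gcd(\lambda+1,d)\mid\gcd(x,d)$ and $0$ otherwise. As $\lambda$ runs over $\{0,\ldots,q-1\}$, the value $\lambda+1$ runs over $\{1,\ldots,q\}$, a complete set covering each residue class modulo $d$ exactly $q/d$ times (here $d\mid q$ is used again). Hence
\begin{equation*}
\delta(\mathbf{s}_x)=\frac1q\sum_{\lambda=0}^{q-1}\delta(0,\lambda+1,0,\lambda+1,x)
=\frac1q\cdot\frac qd\sum_{\substack{r=1\\ \gcd(r,d)\mid\gcd(x,d)}}^{d}\frac1d\gcd(r,d)
=\frac1{d^2}\sum_{\substack{r=1\\ \gcd(r,d)\mid\gcd(x,d)}}^{d}\gcd(r,d).
\end{equation*}
Finally I would reorganize the last sum by the value $f:=\gcd(r,d)$: for each divisor $f\mid d$, the number of $r\in\{1,\ldots,d\}$ with $\gcd(r,d)=f$ is $\varphi(d/f)$, and these $r$ contribute only when $f\mid\gcd(x,d)$. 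This yields $\delta(\mathbf{s}_x)=\frac1{d^2}\sum_{f\mid\gcd(x,d)}f\cdot\varphi(d/f)$, the claimed formula.

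The main obstacle is not the algebra but making sure the reduction of $M_\lambda$ modulo $d$ is legitimate when $d$ is even, because of the entries $\frac{\lambda(\lambda+1)}{2}$ and $\frac{q(q-1)}{2}$: one must check these are genuine integers reducing correctly mod $d$, which is exactly the point the authors flagged in the remark about using $n(n+1)/2$ in place of $n^2$. A secondary point requiring care is the justification that $\delta(\mathbf{s}_x)$ equals the one-step average $\frac1q\sum_\lambda\delta(\mathbf{s}_x M_\lambda)$: this follows because $\boldsymbol{\delta}(q,d)$ exists (it does by Theorem~\ref{teor:qadd}\ref{teor:qadd:dqn}, since $d\mid q$) and is a right eigenvector of $P(q,d)$ with eigenvalue $1$ by Theorem~\ref{teor:rational}, so the $\mathbf{s}_x$-component of the identity $\boldsymbol{\delta}=P\boldsymbol{\delta}$ is precisely that average. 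Everything downstream is the elementary divisor-sum manipulation above.
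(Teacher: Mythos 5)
Your proposal is correct and follows essentially the same route as the paper: existence of the densities via Theorem \ref{teor:qadd}\ref{teor:qadd:dqn} (since $d\mid q$), the one-step relation $\delta(\mathbf{s}_x)=\frac{1}{q}\sum_{\lambda}\delta(\mathbf{s}_xM_\lambda)$ with $\mathbf{s}_xM_\lambda=(0,\lambda+1,0,\lambda+1,x)$ modulo $d$, then Theorem \ref{teor:thetaw=0} and the totient count over residues modulo $d$. One immaterial slip: the $(3,4)$ entry of $M_\lambda$ is $\lambda q-\frac{q(q-1)}{2}$, which need not vanish modulo $d$ when $d$ is even (e.g.\ $q=d=2$ gives $1\bmod 2$), but this does not affect your argument, since only the first and fifth rows of $M_\lambda$ enter the computation of $\mathbf{s}_xM_\lambda$.
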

This result can be seen as a generalization of \cite[Proposition 3.4]{elliott-polya}.
\begin{proof}
Let $\mathbf{s}_x:=(1,0,0,0,x)$, with $x\in\insZ/d\insZ$; by Theorem \ref{teor:qadd}\ref{teor:qadd:dqn}, the density $\delta(q,d;\mathbf{s}_x)$ exists. We have $\mathbf{s}_xM_\lambda=(0,\lambda+1,0,\lambda+1,x)$; hence, applying Theorem \ref{teor:thetaw=0}, we have
\begin{equation*}
\delta(q,d;\mathbf{s}_x)=\inv{q}\sum_{\substack{\lambda=1,\ldots,q\\\lambda|\gcd(d,x)}}\frac{\gcd(\lambda,d)}{d}.
\end{equation*}
However, since $d|q$, the summand for $\lambda$ is equal to the summand for $\lambda+d$. Hence, the previous formula is equal to
\begin{equation*}
\inv{q}\frac{q}{d}\sum_{f|\gcd(x,d)}\frac{f}{d}|\{t\in\{1,\ldots,d\}:\gcd(t,d)=f\}|= \inv{d^2}\sum_{f|\gcd(x,d)}f\cdot\varphi\left(\frac{d}{f}\right),
\end{equation*}
as claimed.
\end{proof}

\begin{oss}
If $\gcd(d,x)=1$, then the theorem gives $\displaystyle{\delta(q,d;\mathbf{s})=\frac{\varphi(d)}{d^2}}$, exactly the limit inferior obtained in Proposition \ref{prop:gamma-phi}.
\end{oss}

The second case we consider is when $d$ and $q$ are coprime; it can be seen as a generalization of \cite[Proposition 3.6]{elliott-polya}, albeit with a stronger hypothesis (since we need that all the densities exist). We denote by $\mathcal{U}(\insZ/d\insZ)$ the set of units of $\insZ/d\insZ$.
\begin{teor}\label{teor:dqcoprimi}
Let $q,d\geq 2$ be coprime positive integers. Suppose that, for every $\mathbf{s}\inZ^5$, the density $\delta(q,d;\mathbf{s})=\delta(\mathbf{s})$ exists. Then, for every $\theta_w,x\in\insZ/d\insZ$ and each $\theta_u\in\mathcal{U}(\insZ/d\insZ)$, we have
\begin{equation*}
\delta(q,d;\theta_u,\theta_w,0,0,x)=\inv{d}.
\end{equation*}
In particular, the map $n\mapsto u_q(n)$ is uniformly distributed modulo $d$.
\end{teor}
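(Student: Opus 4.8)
The plan is to place $\mathbf{s}:=(1,\psi,0,0,x)$ inside the ergodic structure of $\markov(q,d)$ and to show that every ergodic class reachable from it carries the density value $1/d$. First reduce to $\theta_u=1$: since $\theta_u\in\mathcal U(\insZ/d\insZ)$, multiplying \eqref{eq:tot-uq} by $\theta_u^{-1}$ gives $\delta(q,d;\theta_u,\psi,0,0,x)=\delta(q,d;1,\theta_u^{-1}\psi,0,0,\theta_u^{-1}x)$, so it suffices to treat $\mathbf s=(1,\psi,0,0,x)$. I would use two preliminary facts. Since all densities exist, $\boldsymbol\delta=\boldsymbol\delta(q,d)$ satisfies $\boldsymbol\delta=P\boldsymbol\delta$ (Theorem \ref{teor:rational}); expanding $\boldsymbol\delta$ in the basis $\{\mathbf u^C\}$ of right $1$-eigenvectors from the proof of Proposition \ref{prop:markov-autovett} --- whose coordinate of $\mathbf u^C$ at a transient state is the probability of absorption into the ergodic class $C$, so that these coordinates sum to $1$ --- shows that $\delta(\mathbf s)$ is a convex combination of the constants $c_C:=\delta|_C$ over the ergodic classes $C$ reachable from $\mathbf s$. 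Secondly, for fixed $(\theta_u,\theta_w,\theta_2,\theta_1)$ and any $n$ there is exactly one $\theta_0\in\insZ/d\insZ$ for which $n$ solves \eqref{eq:tot-uq}; summing over $n<N$ and dividing by $N$ yields $\sum_{\theta_0}\delta(q,d;\theta_u,\theta_w,\theta_2,\theta_1,\theta_0)=1$. Hence, if an ergodic class $C$ contains all $d$ \emph{siblings} of one of its states --- the states differing from it only in the last coordinate --- then $c_C=1/d$.

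Because $q$ is a unit modulo $d$, the set of states with invertible first coordinate is invariant under every $M_\lambda$ and contains $\mathbf s$, so every ergodic class reachable from $\mathbf s$ sits inside it. The theorem therefore follows from one reachability claim: \emph{if $\mathbf t=(\theta_u,\theta_w,\theta_2,\theta_1,\theta_0)$ is a recurrent state with $\theta_u$ invertible, then for every $y\in\insZ/d\insZ$ the sibling $(\theta_u,\theta_w,\theta_2,\theta_1,y)$ is reachable from $\mathbf t$.} Indeed, given this, for each ergodic class $C$ reachable from $\mathbf s$ one takes any $\mathbf t\in C$ (necessarily recurrent, with invertible first coordinate), observes that all its siblings are reachable from $\mathbf t$ and hence lie in $C$ (which, being ergodic, is closed), and concludes $c_C=1/d$; thus $\delta(\mathbf s)=1/d$, the last sentence of the theorem being the case $\theta_u=1$, $\psi=0$.

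To prove the reachability claim I would use the identity underlying Proposition \ref{prop:trasf-uq}: $\mathbf t M_{\lambda_1}\cdots M_{\lambda_m}$ is the coefficient vector of the map $b\mapsto F_{\mathbf t}(bq^m+c)$, where $F_{\mathbf t}(n):=\theta_u u_q(n)+\theta_w w_q(n)+\theta_2\frac{n(n+1)}{2}+\theta_1 n+\theta_0$ and $c:=\lambda_1+\lambda_2 q+\cdots+\lambda_m q^{m-1}$ runs exactly once over $\{0,\ldots,q^m-1\}$; in particular its last entry equals $F_{\mathbf t}(c)$. Let $m_0$ be the multiplicative order of $q$ modulo $d$ and take $m$ to be a multiple of $m_0 d$. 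Using Lemmas \ref{lemma:wqaq} and \ref{lemma:uqaq} one checks that the first and third coordinates of $\mathbf t M_{\lambda_1}\cdots M_{\lambda_m}$ are then independent of the $\lambda_i$ and equal $\theta_u$ and $\theta_2$; that the second coordinate is $\theta_w+\theta_u c$; and that the fourth and fifth are $\theta_1+\theta_u G_m(c)+\theta_2 H_m(c)$ and $F_{\mathbf t}(c)$, where $G_m(c)$ and $H_m(c)$ denote the coefficients of $b$ in $u_q(bq^m+c)$ and in $\binom{bq^m+c+1}{2}$. Hence reaching the sibling with last coordinate $y$ amounts to solving, for some such $m$,
\begin{equation*}
c\equiv 0,\qquad \theta_u G_m(c)+\theta_2 H_m(c)\equiv 0,\qquad F_{\mathbf t}(c)\equiv y\pmod d,\qquad c\in\{0,\ldots,q^m-1\}.
\end{equation*}

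The hard part is exactly the solvability of this system for every target $y$, and coprimality of $q$ and $d$ is what makes it possible: it keeps $q$, $q^2$ and the relevant matrix entries invertible and rules out the degeneration that makes $\delta(q,d;1,0,0,0,x)\ne 1/d$ when $d\mid q$ (Theorem \ref{teor:d|q}). I would handle it by writing $c$ as a concatenation $c=bq^M+a$ of two base-$q$ digit blocks with $M$ a large multiple of $m_0 d$, so that $w_q$, $u_q$, $G_m$, $H_m$ and $F_{\mathbf t}$ split additively modulo $d$ up to explicit cross terms; this converts the three congruences into congruences in $(a,b)$ constrained by $a+b\equiv 0\bmod d$, and the remaining point is to show that as $a$ runs through suitable residues (with $b\equiv-a$) the value $F_{\mathbf t}(c)\bmod d$ runs through all of $\insZ/d\insZ$ while the two auxiliary congruences hold --- ideally by exhibiting one free digit of $c$ whose variation alone moves $F_{\mathbf t}(c)$ through every residue. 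Carrying out this bookkeeping, using that $\{q^j\bmod d\}_{j\geq 0}$ generates the groups that arise together with the elementary recursions for $w_q$, $u_q$ and triangular numbers, is where the genuine work lies.
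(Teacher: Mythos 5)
Your overall framework (reduce to $\theta_u=1$, use the eigenvector identity $\boldsymbol{\delta}=P\boldsymbol{\delta}$, the fact that $\sum_{\theta_0}\delta(\theta_u,\theta_w,\theta_2,\theta_1,\theta_0)=1$, and Proposition \ref{prop:markov-autovett} to force all densities in one ergodic class to be $1/d$ once a full set of ``siblings'' lies in it) matches the paper's strategy. But the proof has a genuine gap at exactly the step that carries the whole content of the theorem: your reachability claim (from a recurrent state with invertible first coordinate one can reach every sibling) is never proved. You reduce it to the simultaneous solvability, for every target $y$, of the system $c\equiv 0$, $\theta_uG_m(c)+\theta_2H_m(c)\equiv 0$, $F_{\mathbf t}(c)\equiv y \pmod d$ over base-$q$ digit strings $c$, and then say that solving it ``is where the genuine work lies.'' That is precisely the work the theorem requires, and it is not routine: your formulation is even more rigid than necessary, since you insist on returning \emph{all} four leading coordinates to their original values (hence $c\equiv 0\bmod d$, plus the auxiliary congruence), and it is not at all clear that $F_{\mathbf t}(c)$ then sweeps every residue class; no mechanism (a ``free digit'' or otherwise) is exhibited.

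The paper closes this gap by a different and much more economical device, which is worth noting because it is where coprimality of $q$ and $d$ really enters. Since $\det M_\lambda=q^4$ is a unit modulo $d$, each $M_\lambda$ lies in $\mathrm{GL}_5(\insZ/d\insZ)$ and has finite order; hence every state is ergodic, and moreover $M_0^{-1}$ (and its powers) can be realized as genuine steps of the chain. One then computes the explicit integer matrices $M_1M_0^{-1}$, $M_1^2M_0^{-2}$ and $M_2^2M_0^{-2}$ and checks that they map states of the form $(1,a,0,0,x)$ to states of the same form, shifting $a$ by $1$ (first matrix) and shifting $x$ by $2$ and by $q+6$ (second and third, with $\gcd(2,q+6,d)=1$ when $d$ is even because $q$ is odd). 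This shows directly that \emph{all} states $(1,a,0,0,x)$ communicate --- there is no need to hold $\theta_w$ fixed along the way, which is what makes your congruence system hard --- and then the sibling-sum argument gives $1/d$ immediately. So: correct skeleton, same high-level plan as the paper, but the decisive combinatorial/algebraic step is missing, and the paper's trick of multiplying by inverses of the $M_\lambda$ modulo $d$ is the idea you would need (or would have to replace by a completed digit analysis) to finish.
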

\begin{proof}
The proof is similar to the proof of Theorem \ref{teor:thetaw=0}. Note first that we can suppose $\theta_u=1$, since $\gamma(N,q,d;\mathbf{s})=\gamma(N,q,d;u\mathbf{s})$ for every $u\in\mathcal{U}(\insZ/d\insZ)$. Since all the limits exist, by Theorem \ref{teor:rational} the density vector $\boldsymbol{\delta}$ is a right eigenvector of the transition matrix of the Markov chain $\markov(q,d)$.

We calculate explicitly the inverse of $M_0^{-1}$ as a matrix with rational entries:
\begin{equation*}
M_0^{-1}=\inv{q^2}\begin{pmatrix}
q & q(q-1) & -1 & -\frac{q-1}{2} & 0\\
0 & q^2 & 0 & -q & 0\\
0 & 0 & 1 & \frac{q-1}{2} & 0\\
0 & 0 & 0 & q & 0\\
0 & 0 & 0 & 0 & q^2
\end{pmatrix}.
\end{equation*}
Hence, we have
\begin{equation*}
M_1M_0^{-1}=\begin{pmatrix}
1 & 1 & 0 & 0 & 0\\
0 & 1 & 0 & 0 & 0\\
0 & 0 & 1 & 1 & 1\\
0 & 0 & 0 & 1 & 1\\
0 & 0 & 0 & 0 & 1
\end{pmatrix},
\end{equation*}
\begin{equation*}
M_1^2M_0^{-2}=\begin{pmatrix}
1 & q+1 & 0 & 0 & 2\\
0 & 1 & 0 & 0 & 1\\
0 & 0 & 1 & q+1 & \frac{(q+1)(q+2)}{2}\\
0 & 0 & 0 & 1 & q+1\\
0 & 0 & 0 & 0 & 1
\end{pmatrix}
\end{equation*}
and
\begin{equation*}
M_2^2M_0^{-2}=\begin{pmatrix}
1 & 2(q+1) & 0 & 0 & q+6\\
0 & 1 & 0 & 0 & 2\\
0 & 0 & 1 & 2(q+1) & (q+1)(2q+3)\\
0 & 0 & 0 & 1 & 2(q+1)\\
0 & 0 & 0 & 0 & 1
\end{pmatrix}.
\end{equation*}
In particular, $M_1M_0^{-1}$, $M_1^2M_0^{-2}$ and $M_2^2M_0^{-2}$ have all integer coefficients, and so they can always be reduced modulo $d$.

Consider now the matrices $M_\lambda$ modulo $d$. Each determinant is equal to $q^4$; since $q$ and $d$ are coprime, it follows that they are all invertible modulo $d$, and that their inverse is the reduction modulo $d$ of their rational inverses. Moreover, since $\mathrm{GL}_5(\insZ/d\insZ)$ is a finite group, each $M_\lambda$ has a finite order $h_\lambda$; hence, each state of Markov chain $\markov(q,d)$ is ergodic. Indeed, if $\mathbf{t}$ is reachable from $\mathbf{s}$, then $\mathbf{t}=\mathbf{s}M_{\lambda_1}\cdots M_{\lambda_k}$ for some $\lambda_1,\ldots,\lambda_k$; but then
\begin{equation*}
\mathbf{s}=\mathbf{t}M_{\lambda_k}^{h_{\lambda_k}-1}\cdots M_{\lambda_1}^{h_{\lambda_1}-1}
\end{equation*}
so $\mathbf{s}$ is reachable from $\mathbf{t}$.

We claim that, for every $a$ and $x$, the tuples $(1,a,0,0,x)$ and $(1,b,0,0,x)$ are communicating.

For every $\theta_w$ and every $x$, we have the three equalities
\begin{equation*}
\begin{pmatrix}1 & \theta_w & 0 & 0 & x\end{pmatrix}M_1M_0^{-1}=\begin{pmatrix}1 & \theta_w+1 & 0 & 0 & x\end{pmatrix},
\end{equation*}
\begin{equation*}
\begin{pmatrix}1 & 0 & 0 & 0 & x\end{pmatrix}M_1^2M_0^{-2}=\begin{pmatrix}1 & q+1 & 0 & 0 & x+2\end{pmatrix}
\end{equation*}
and
\begin{equation*}
\begin{pmatrix}1 & 0 & 0 & 0 & x\end{pmatrix}M_2^2M_0^{-2}=\begin{pmatrix}1 & 2(q+1) & 0 & 0 & x+q+6\end{pmatrix}.
\end{equation*}
Since $q\geq 2$, we can always use $M_0$ and $M_1$: hence, the first equality implies that, for every $x$, the 5-tuples $(1,a,0,0,x)$ and $(1,b,0,0,x)$ are communicating, while the first and the second imply that $(1,0,0,0,x)$ and $(1,a,0,0,x+2)$ are communicating for every $a$ and every $x$ (in particular, $a=0$). 

If $d$ is odd, this implies that $(1,0,0,0,x)\leftrightarrow(1,0,0,0,y)$ for every $x,y\in\insZ/d\insZ$, and thus that $(1,a,0,0,x)$ and $(1,b,0,0,y)$ are communicating whatever $a$ and $b$ are.

If $d$ is even, then $q\geq 3$ (since $\gcd(d,q)=1$): hence, we can also use $M_2$, obtaining that $(1,0,0,0,x)\leftrightarrow(1,2(q+1),0,0,x+q+6)$, and so $(1,0,0,0,x)\leftrightarrow(1,a,0,0,x+q+6)$ for every $a$. Hence, $(1,0,0,0,x)$ is communicating with 
\begin{equation*}
(1,a,0,0,x+2z_1+(q+6)z_2)
\end{equation*}
for every choice of $z_1,z_2\inN$. However, $q$ is odd, and thus $\gcd(2,q+6,d)=1$: hence, $2z_1+(q+6)z_2$ can be equal to any $s\in\insZ/d\insZ$. Therefore, $(1,0,0,0,x)\leftrightarrow(1,a,0,0,y)$ for every $a$ and $y$, as claimed.

Therefore, for every $d$, we have $\delta(1,a,0,0,x)=\delta(1,b,0,0,y)$ for each $a,b,x,y$. However,
\begin{equation*}
\sum_{x=0}^{d-1}\delta(1,\theta_w,0,0,x)=1;
\end{equation*}
hence, for every $a$ and every $x$ we have $\delta(1,a,0,0,x)=1/d$, as claimed.
\end{proof}

\section{Algebraic interpretation}\label{sect:Int}
Let $D$ be an integral domain with quotient field $K$. The set of \emph{integer-valued polynomials} on $D$ is
\begin{equation*}
\Int(D):=\{f\in K[X]\mid f(D)\subseteq D\}.
\end{equation*}
The set $\Int(D)$ is always an integral domain contained between $D[X]$ and $K[X]$. There are two sequences of $D$-modules associated to $\Int(D)$: the first is formed by the \emph{characteristic ideals} $\caratt_n:=\caratt_n(D)$, defined as the union of $(0)$ with the leading coefficients of the polynomials of $\Int(D)$ of degree $n$; the second contains the modules of the form
\begin{equation*}
\Int_n(D):=\{f\in\Int(D)\mid \deg f\leq n\}.
\end{equation*}
If $D$ is a Dedekind domain, these two sequences are linked by the relation \cite[Corollary II.3.6]{IntD}
\begin{equation}\label{eq:Intn-simeq}
\Int_n(D)\simeq\bigoplus_{k=0}^n\caratt_k\simeq D^n\oplus\prod_{k=0}^n\caratt_k.
\end{equation}

For any maximal ideal $\mathfrak{m}$ of $D$, let $N(\mathfrak{m})$ be the \emph{norm} of $\mathfrak{m}$, that is, the cardinality of $D/\mathfrak{m}$; for any $q\inN$, let $\Pi_q$ be the product of the maximal ideals of norm $q$. The subgroup of the class group generated by the $\Pi_q$ is called the \emph{P\'olya-Ostrowski group} of $D$, and is denoted by $\polya(D)$. Several papers studied $\polya(D)$ when $D$ is an integral extension of $\insZ$, with special focus on looking for which $D$ the group $\polya(D)$ is trivial (in this case, the quotient field $K$ of $D$ is said to be a \emph{P\'olya field}) \cite{zantema,leriche,biquadratic-polya}: this happens if and only if $\Int(D)$ has a \emph{regular basis}, i.e., a basis $\{f_0,f_1,\ldots,\}$ over $D$ such that $\deg f_i=i$ for every $i$. For example, every cyclotomic extension of $\insQ$ is a P\'olya field \cite[Proposition II.4.3]{IntD}.

The (classes of the) characteristic ideals of $D$ naturally belong to $\polya(D)$, and by \cite[Proposition II.3.9]{IntD} we have
\begin{equation*}
\caratt_n=\prod_{q=2}^n\Pi_q^{-w_q(n)}.
\end{equation*}
On the other hand, the modules $\Int_n(D)$ does not belong, by themselves, to $\polya(D)$, for the trivial reason that they are not fractional ideals of $D$. However, by \eqref{eq:Intn-simeq}, we can write $\Int_n(D)$ as the direct sum $D^n\oplus\widehat{\Int_n}(D)$, where
\begin{equation*}
\widehat{\Int_n}(D):=\prod_{k=1}^n\caratt_n(D)
\end{equation*}
is a fractional ideal of $D$; by construction, the isomorphism class of $\widehat{\Int_n}(D)$ belongs to $\polya(D)$. Moreover, since $D$ is a Dedekind domain, $\widehat{\Int_n}(D)$ is a projective module of rank 1, and thus $\Int_n(D)$ is free if and only if $\widehat{\Int_n}(D)$ is free  \cite[Theorem 4.11]{lam-serre}. Applying again \eqref{eq:Intn-simeq}, we see that
\begin{equation*}
\widehat{\Int_n}(D)\simeq\prod_{q=2}^n\Pi_q^{-u_q(n)}
\end{equation*}
for every $n\inN$.

Therefore, we have two maps $\insN\longrightarrow\polya(D)$ given by
\begin{equation*}
n\mapsto[\caratt_n(D)]\quad\text{and}\quad n\mapsto[\widehat{\Int_n}(D)],\end{equation*}
and studying how many times $\caratt_n(D)$ and $\widehat{\Int_n}(D)$ are isomorphic to a specific module is essentially equivalent to studying the limit distribution of these maps in $\polya(D)$. Elliott \cite{elliott-polya} conjectured that the density of natural numbers such that $\Int_n(D)$ is free exists and is rational, and moreover that it is always at least $1/|\polya(D)|$. He also makes several conjectures for more specific cases, mostly expressed in terms of multisets.

Suppose now that there is a unique $q$ such that $\Pi_q$ is not a principal ideal of $D$, and let $d:=|\polya(D)|$. Then, $\polya(D)\simeq\insZ/d\insZ$ is a cyclic group and $[\Pi_q]$ is a generator; moreover,
\begin{equation*}
\caratt_n\simeq\Pi_q^{-w_q(n)\bmod d}\quad\text{and}\quad\widehat{\Int_n}(D)\simeq\Pi_q^{-u_q(n)\bmod d}.
\end{equation*}
Therefore, the distribution of the maps $n\mapsto[\caratt_n(D)]$ and $n\mapsto[\widehat{\Int_n}(D)]$ in $\polya(D)$ is determined by the distribution of $n\mapsto w_q(n)$ and $n\mapsto u_q(n)$ modulo $d$, and we can simply translate the results in Sections \ref{sect:wq} and \ref{sect:uq} to this context; the definitions of limit distribution and of being uniformly distributed in $\polya(D)$ is analogous to the ones after Definition \ref{defin:density}.
\begin{prop}\label{prop:dedekind}
Let $D$ be a Dedekind domain, and suppose that $\Pi_n$ is nonprincipal only for $n=q$; let $d:=|\polya(D)|$.
\begin{enumerate}[(a)]
\item The map $n\mapsto[\caratt_n(D)]$ is uniformly distributed in $\polya(D)$.~
\item If, for every $\mathbf{s}$, the density $\delta(q,d;\mathbf{s})$ exists, then for every $g\in\polya(D)$ the density of $n$ such that $[\widehat{\Int_n}(D)]=g$ is rational.
\item If $d|q$, the density of $n$ such that $\Int_n(D)\simeq\Pi_q^x\oplus D^n$ is equal to $\displaystyle{\inv{d^2}\sum_{f|\gcd(x,d)}f\varphi\left(\frac{d}{f}\right)}$.
\item If $q$ and $d$ are coprime and, for every $\mathbf{s}\inZ^5$, the density $\delta(q,d;\mathbf{s})$ exists, then the map $n\mapsto[\widehat{\Int_n}(D)]$ is uniformly distributed in $\polya(D)$.
\end{enumerate}
\end{prop}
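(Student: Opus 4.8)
The plan is to reduce each of the four assertions to a statement about the distribution modulo $d$ of $w_q$ or $u_q$, and then quote the relevant result from Sections \ref{sect:wq} and \ref{sect:uq}. Fix once and for all an isomorphism $\polya(D)\simeq\insZ/d\insZ$ carrying $[\Pi_q]$ to $1$. Under the identifications $\caratt_n(D)\simeq\Pi_q^{-w_q(n)\bmod d}$ and $\widehat{\Int_n}(D)\simeq\Pi_q^{-u_q(n)\bmod d}$ recalled above, and using that $\Pi_q$ has order exactly $d$ in the class group, the class $[\caratt_n(D)]$ corresponds to the residue $-w_q(n)\bmod d$ and $[\widehat{\Int_n}(D)]$ to $-u_q(n)\bmod d$. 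Consequently, for $g$ corresponding to $j\in\insZ/d\insZ$, the density of $n$ with $[\caratt_n(D)]=g$ equals $\delta(q,d;0,1,0,0,j)$, and the density of $n$ with $[\widehat{\Int_n}(D)]=g$ equals $\delta(q,d;1,0,0,0,j)$ (these are the densities of the solutions of \eqref{eq:tot-uq} with the indicated coefficient tuples, after rewriting $w_q(n)\equiv -j$ and $u_q(n)\equiv -j$ as $w_q(n)+j\equiv0$ and $u_q(n)+j\equiv 0$).

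For part (a), $\delta(q,d;0,1,0,0,j)$ is, by Theorem \ref{teor:thetaw=0} (equivalently Corollary \ref{cor:wqthetan} with $\theta=0$), equal to $\tfrac1d\gcd(1,0,d)=\tfrac1d$ for every $j$, since $\gcd(1,0,d)=1$ divides $\gcd(j,d)$; hence $n\mapsto[\caratt_n(D)]$ is uniformly distributed in $\polya(D)$. For part (b), the hypothesis is exactly that the density vector $\boldsymbol{\delta}(q,d)$ exists, so Theorem \ref{teor:rational} applies and all its entries are rational; in particular each $\delta(q,d;1,0,0,0,j)$ is rational, which is the asserted statement about $[\widehat{\Int_n}(D)]$.

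For part (c), since $d\mid q=q^1$, Theorem \ref{teor:qadd}\ref{teor:qadd:dqn} guarantees that $\boldsymbol{\delta}(q,d)$ exists without further hypotheses. One first checks that $\Int_n(D)\simeq\Pi_q^x\oplus D^n$ if and only if $\widehat{\Int_n}(D)\simeq\Pi_q^x$: this follows from \eqref{eq:Intn-simeq}, which exhibits $\Int_n(D)$ as $D^n\oplus\widehat{\Int_n}(D)$ with $\widehat{\Int_n}(D)$ a rank-$1$ projective, together with the uniqueness of the Steinitz class of a finitely generated projective module over a Dedekind domain. Thus the density sought is the density of $n$ with $-u_q(n)\equiv x\bmod d$, i.e. $\delta(q,d;1,0,0,0,x)$, and Theorem \ref{teor:d|q} evaluates this to $\tfrac1{d^2}\sum_{f\mid\gcd(x,d)}f\,\varphi(d/f)$ (the sign of $x$ is immaterial, as the formula depends only on $\gcd(x,d)=\gcd(-x,d)$). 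For part (d), under the stated hypothesis that all $\delta(q,d;\mathbf{s})$ exist and with $\gcd(q,d)=1$, Theorem \ref{teor:dqcoprimi} applied with $\theta_u=1\in\mathcal{U}(\insZ/d\insZ)$ and $\theta_w=0$ gives $\delta(q,d;1,0,0,0,x)=\tfrac1d$ for every $x$, so $u_q$ is uniformly distributed modulo $d$ and hence $n\mapsto[\widehat{\Int_n}(D)]$ is uniformly distributed in $\polya(D)$.

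There is essentially no obstacle here beyond careful bookkeeping: the content of the proposition is entirely in the earlier sections, and the only place where anything module-theoretic is used is the reduction in part (c) from $\Int_n(D)$ to $\widehat{\Int_n}(D)$, which is a standard consequence of the structure theory of projective modules over Dedekind domains. The remaining steps are the translation dictionary between $\polya(D)$ and $\insZ/d\insZ$ set up in the first paragraph.
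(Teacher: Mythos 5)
Your proposal is correct and follows essentially the same route as the paper, whose proof is simply the observation that the four parts translate, via the identifications $\caratt_n\simeq\Pi_q^{-w_q(n)\bmod d}$ and $\widehat{\Int_n}(D)\simeq\Pi_q^{-u_q(n)\bmod d}$, into Corollary \ref{cor:wqthetan}, Theorem \ref{teor:rational}, Theorem \ref{teor:d|q} and Theorem \ref{teor:dqcoprimi}. Your extra bookkeeping (the explicit dictionary with $\insZ/d\insZ$, the sign remark, and the Steinitz-class reduction from $\Int_n(D)\simeq\Pi_q^x\oplus D^n$ to $\widehat{\Int_n}(D)\simeq\Pi_q^x$ in part (c)) just makes explicit what the paper leaves implicit in the discussion preceding the proposition.
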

\begin{proof}
The four statements are the translation, respectively, of Corollary \ref{cor:wqthetan}, Theorem \ref{teor:rational}, Theorem \ref{teor:d|q} and Theorem \ref{teor:dqcoprimi}.
\end{proof}

Suppose $D$ is the ring of integers of a number field $K$. Some examples in which $D$ satisfies the hypotheses of the previous proposition are given in \cite[Examples 7.3 and 7.4]{elliott-polya}. Other examples can be constructed using \cite[proof of Proposition II.4.2]{IntD}: if $K$ is Galois over $\insQ$, then $\Pi_q$ is principal for every $q=p^r$ such that $p$ is not ramified in $K$. Since $p$ is ramified if and only if it divides the discriminant \cite[Chapter II, Corollary 2.12]{neukirch}, Proposition \ref{prop:dedekind} can be applied if the discriminant of $K$ is a prime power. Unfortunately, the simplest cases in which this happens (the quadratic fields $K=\insQ(\sqrt{p})$, where $|p|$ is a prime and $p\equiv 1\bmod 4$, and the cyclotomic extensions $K=\insQ(\zeta_p)$ for $p$ prime) are also P\'olya fields \cite[Proposition II.4.3 and Corollary II.4.5]{IntD}, and thus $\polya(D)$ is actually trivial; other examples where the discriminant is a prime power (with unknown P\'olya-Ostrowski group) are collected in \cite{jones-ramified}.

When there is more than one $\Pi_q$ which is nonprincipal, it is necessary to study the density of the $n$ that are simultaneous solution to the equations
\begin{equation*}
\theta_u^{(i)}u_{q_i}(n)+\theta_w^{(i)}w_{q_i}(n)+\theta_2^{(i)}\frac{n(n+1)}{2}+\theta_1^{(i)}n+\theta_0^{(i)}\equiv 0\bmod d_i
\end{equation*}
for $i\in\{1,\ldots,k\}$, where $q_1,\ldots,q_k$ and $d_1,\ldots,d_k$ are arbitrary integers (and $q_1,\ldots,q_k$ are pairwise different). This is essentially a problem in determining how much these equations are correlated; it is reasonable to think that (under the obvious hypothesis that at least one between $\theta_u^{(i)}$ and $\theta_w^{(i)}$ is nonzero modulo $d_i$, for each $i$) such equations are actually independent, so that the density of the solutions of all the equations is determined by the densities of the solutions of the single equations. The major obstruction seems to be the problem of understanding the behaviour of $u_{q_1}(aq_2+\lambda)$ and $w_{q_1}(aq_2+\lambda)$ when $q_1\neq q_2$.

\section*{Acknowledgements}
The author wishes to thank Francesco Pappalardi and the referee for their suggestions.

\end{document}